\documentclass[11pt]{amsart}
\usepackage[margin=1.4in]{geometry}
\usepackage{amsfonts,amsmath,amsthm,amscd,amsxtra}
\usepackage{mathtools,mathptmx}
\usepackage{enumerate,epsfig,verbatim}
\usepackage{centernot}
\usepackage{todonotes}
\usepackage[T1]{fontenc}
\usepackage[utf8]{inputenc}
\usepackage[usenames,dvipsnames]{pstricks}
\usepackage[mathscr]{eucal}
\usepackage{tikz}
\usepackage{amssymb}
\usepackage{mathdots}
\usepackage{mathrsfs}  
\usetikzlibrary{graphs,graphs.standard,matrix, calc, arrows,quotes}
\usepackage[pagebackref]{hyperref}
\usepackage{nicefrac}
\usepackage{array}
\usepackage{xcolor}
\usepackage{color}
\usepackage[all]{xypic}
\usepackage[notcite,notref]{}
\usepackage{url}
\usepackage{datetime}

\SelectTips{cm}{}

\def\latex/{{\protect\LaTeX}}
\def\latexe/{{\protect\LaTeXe}}
\def\amslatex/{{\protect\AmS-\protect\LaTeX}}
\def\tex/{{\protect\TeX}}
\def\amstex/{{\protect\AmS-\protect\TeX}}
\def\bibtex/{{Bib\protect\TeX}}
\def\makeindx/{\textit{MakeIndex}}
\theoremstyle{plain} 
\newtheorem{thm}{Theorem}[section]
\newtheorem{prop}[thm]{Proposition}
\newtheorem{cor}[thm]{Corollary}

\theoremstyle{definition}
\newtheorem{chunk}[thm]{\hspace*{-1.065ex}\bf}
\newtheorem{lem}[thm]{Lemma}

\newtheorem{eg}[thm]{Example}

\newtheorem{rmk}[thm]{Remark}
\newtheorem{diss}[thm]{Discussion}

\theoremstyle{remark}

\newtheorem*{claim*}{Claim}

\newcommand{\ZZ}{\mathbb{Z}}

\newcommand{\fm}{\mathfrak{m}}

\newcommand{\fq}{\mathfrak{q}}
\DeclareMathOperator{\e}{\operatorname{{e}}}

\DeclareMathOperator{\id}{id}

\DeclareMathOperator{\Tor}{Tor}
\DeclareMathOperator{\Ext}{Ext}

\DeclareMathOperator{\Hom}{Hom}

\DeclareMathOperator{\len}{length}

\DeclareMathOperator{\im}{im}

\DeclareMathOperator{\pd}{pd}

\DeclareMathOperator{\edim}{embdim}

\newcommand{\Ann}{\textup{Ann}}

\newcommand{\bb}{\left[ \begin{smallmatrix}}
\newcommand{\eb}{\end{smallmatrix} \right]}
\def\urltilda{\kern -.15em\lower .7ex\hbox{\~{}}\kern
  .04em}\def\urldot{\kern -.10em.\kern -.10em}\def\urlhttp{http\kern
  -.10em\lower -.1ex\hbox{:}\kern -.12em\lower 0ex\hbox{/}\kern
  -.18em\lower 0ex\hbox{/}}

\newcommand{\rmQ}{\mathrm{Q}}
\newcommand{\Q}{\mathrm{Q}}

\newcommand{\fkq}{\mathfrak{q}}

\newcommand{\con}{\mathfrak{c}}

\def\urltilda{\kern -.15em\lower .7ex\hbox{\~{}}\kern .04em}
\def\urldot{\kern -.10em.\kern -.10em}\def\urlhttp{http\kern -.10em\lower -.1ex
\hbox{:}\kern -.12em\lower 0ex\hbox{/}\kern -.18em\lower 0ex\hbox{/}}

\title[Modules of minimal multiplicity]{Modules of minimal multiplicity over One-Dimensional Cohen-Macaulay Local Rings}

\author[Ela Celikbas]{Ela Celikbas}
\address{Ela Celikbas\\
School of Mathematical and Data Sciences\\
West Virginia University \\
Morgantown, WV 26506-6310, USA}
\email{ela.celikbas@math.wvu.edu}

\author[Olgur Celikbas]{Olgur Celikbas}
\address{Olgur Celikbas\\
School of Mathematical and Data Sciences\\
West Virginia University \\
Morgantown, WV 26506-6310, USA}
\email{olgur.celikbas@math.wvu.edu}

\author[Naoki Endo]{Naoki Endo}
\address{Naoki Endo\\
School of Political Science and Economics, Meiji University, 1-9-1, Eifuku, Suginami-ku, Tokyo, 168-8555, Japan} 
\email{endo@meiji.ac.jp}

\author[Shinya Kumashiro]{Shinya Kumashiro}
\address{Shinya Kumashiro\\
Faculty of Engineering, Osaka Institute of Technology
5-16-1 Omiya, asahi-ku, Osaka, 535-8585, Japan}
\email{shinya.kumashiro@oit.ac.jp}

\thanks{2020 {\em Mathematics Subject Classification.} 13A15, 13D02, 13C14, 13H15}
\thanks{{\em Key words and phrases.} Modules of minimal multiplicity, numerical semigroup rings, idealizations, stable and trace ideals, Burch and Ulrich modules}

\begin{document}

\begin{abstract} We study finitely generated modules of minimal multiplicity, a notion introduced by Puthenpurakal that extends the classical concept of minimal multiplicity from rings to modules. Our main result characterizes when trace ideals or reflexive ideals yield modules of minimal multiplicity over one-dimensional Cohen-Macaulay local rings. As a consequence, we show that a one-dimensional non-Gorenstein reduced local ring with a canonical module has minimal multiplicity if and only if its canonical module has minimal multiplicity as a module. We also construct several examples and compare them with Burch and Ulrich modules, highlighting cases where minimal multiplicity coincides with the Burch or Ulrich property.
\end{abstract}

\maketitle

\section{Introduction}

Throughout this paper, unless otherwise stated, $R$ denotes a \emph{one-dimensional commutative Noetherian Cohen-Macaulay local ring} with maximal ideal $\fm$ and residue field $k$. Moreover, all $R$-modules are assumed to be finitely generated.

In this paper we are concerned with modules of minimal multiplicity, a class that was initially defined and studied by Puthenpurakal~\cite{TonyP} over Cohen-Macaulay rings of arbitrary dimension. Various properties of modules of minimal multiplicity have been more recently investigated in~\cite{Souvik2} and~\cite{Souvik1}.

Our main purpose is to investigate modules of minimal multiplicity with respect to ideals that are either trace or reflexive. Note that, given a ring $R$, an $\fm$-primary ideal $I$ of $R$ is called \emph{reflexive} if $\left(R:_{\rm Q}(R:_{\rm Q}I)\right)=I$, and called \emph{trace} if $\left(I:_{\rm Q}I\right)=\left(R:_{\rm Q}I\right)$, where $\rm Q$ denotes the total ring of fractions of $R$. In this direction, we prove the following: 

\begin{thm} \label{thm-intro} Let $R$ be a one-dimensional generically Gorenstein non-Gorenstein ring with canonical ideal $\omega_R$, and let $I$ be an $\mathfrak{m}$-primary ideal of $R$. Assume $I$ is either a trace ideal of $R$, or reflexive as an $R$-module. Then the following conditions are equivalent:
\begin{enumerate}[\rm(i)]
\item $I$ is stable, that is, $I^2=xI$ for some $x\in I$.
\item Every torsion-free $R$-module has minimal multiplicity with respect to $I$.
\item $\omega_R$ has minimal multiplicity with respect to $I$.
\end{enumerate}
\end{thm}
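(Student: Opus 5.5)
The plan is to prove (i)$\Rightarrow$(ii)$\Rightarrow$(iii)$\Rightarrow$(i). In dimension one I will use the following form of the definition (equivalent to Puthenpurakal's via the usual length computation with a reduction, and I assume the paper records it earlier): a torsion-free module $M$ has minimal multiplicity with respect to $I$ if and only if $I^2M=xIM$ for some $x\in I$ that is a reduction of $I$ with respect to $M$.

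\emph{The easy implications.} For (i)$\Rightarrow$(ii), if $I^2=xI$ then $I^2M=xIM$ for every $M$. Moreover $x$ is a parameter, hence a nonzerodivisor on torsion-free modules, so it is a reduction. For (ii)$\Rightarrow$(iii), note that $\omega_R$ is maximal Cohen--Macaulay, hence torsion-free, in dimension one.

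\emph{Setup for (iii)$\Rightarrow$(i).} Since $R$ is generically Gorenstein, I replace $\omega_R$ by an isomorphic fractional canonical ideal $K\subseteq \rm Q$. I will use the standard duality facts:
\begin{itemize}
\item $K:_{\rm Q}(K:_{\rm Q}X)=X$ for every fractional ideal $X$;
\item $K:_{\rm Q}K=R$;
\item consequently $K:_{\rm Q}IK=(K:_{\rm Q}K):_{\rm Q}I=R:_{\rm Q}I$.
\end{itemize}
Now assume $I^2K=xIK$ with $x$ a nonzerodivisor. Multiplying by $x^{-1}$ in $\rm Q$ gives $(x^{-1}I)\cdot IK=IK$, so
$$x^{-1}I\subseteq IK:_{\rm Q}IK .$$
It therefore suffices to show $IK:_{\rm Q}IK\subseteq I:_{\rm Q}I$. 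Then $x^{-1}I^2\subseteq I$, so $I^2\subseteq xI$, and hence $I^2=xI$.

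\emph{The key step: the inclusion $IK:_{\rm Q}IK\subseteq I:_{\rm Q}I$.} First I show
$$IK:_{\rm Q}IK\subseteq (K:_{\rm Q}IK):_{\rm Q}(K:_{\rm Q}IK)=(R:_{\rm Q}I):_{\rm Q}(R:_{\rm Q}I).$$
Take $y$ with $yIK\subseteq IK$ and $z\in K:_{\rm Q}IK$. Then $yz\,IK\subseteq z\,IK\subseteq K$, so $yz\in K:_{\rm Q}IK$. I then finish in the two cases.
\begin{itemize}
\item \emph{$I$ is a trace ideal.} Here $R:_{\rm Q}I=I:_{\rm Q}I$, and $I:_{\rm Q}I$ is a ring, so $(I:I):(I:I)=I:I$.
\item \emph{$I$ is reflexive.} Put $L=R:_{\rm Q}I$, so that $I=R:_{\rm Q}L$. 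The same argument as above, with $R$ in place of $K$, gives $L:_{\rm Q}L\subseteq (R:_{\rm Q}L):_{\rm Q}(R:_{\rm Q}L)=I:_{\rm Q}I$.
\end{itemize}

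The main obstacle is this colon comparison: recognizing that stability can be read off from $x^{-1}I\subseteq I:I$, and that the endomorphism ring of $IK$ is squeezed inside $I:I$ by $K$-duality. The non-Gorenstein hypothesis does not appear to be needed for this implication. I would double-check that the reduction $x$ provided by the definition can be taken in $I$ and is a nonzerodivisor on $R$, since $I$ is $\fm$-primary and $\dim R=1$.
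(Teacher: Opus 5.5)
Your argument is correct and is essentially the paper's proof of Theorem~\ref{corchar1}. The inclusion $x^{-1}I\subseteq (R:_{\rm Q}I):_{\rm Q}(R:_{\rm Q}I)$ that you reach says exactly what the paper shows, namely that $(R:_{\rm Q}I)\cong \Hom_R(I\omega_R,\omega_R)$ is $I$-Ulrich, i.e.\ $x(R:_{\rm Q}I)=I(R:_{\rm Q}I)$, and your trace/reflexive case split then uses the same colon identities. The one difference is that you obtain this Ulrich property, and also handle the reflexive case, through the elementary inclusion $(X:_{\rm Q}X)\subseteq \left((N:_{\rm Q}X):_{\rm Q}(N:_{\rm Q}X)\right)$ computed inside ${\rm Q}$, whereas the paper derives it from Proposition~\ref{LAGAprop} by reducing modulo $x$ and applying Lemma~\ref{obs}; for fractional ideals your route is a valid and more self-contained shortcut.
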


Theorem~\ref{thm-intro} is a consequence of a more general result, namely Theorem~\ref{corchar1}, and is proved as Corollary~\ref{corcharI}; see also Remark~\ref{SI}(i) and~\ref{fractional} for related details.

A (one-dimensional) Cohen-Macaulay local ring is said to have minimal multiplicity if its multiplicity equals its embedding dimension~\cite{Abhyankar}. Using Theorem~\ref{thm-intro} together with other results established in this paper, we characterize the minimal multiplicity of rings in terms of that of certain modules. Our result in this direction can be summarized as follows; see Proposition~\ref{prop-min-mul}, Lemma~\ref{m-dual}, and Corollary~\ref{corchar2} for details.

\begin{cor} \label{cor-intro} Let $R$ be a one-dimensional generically Gorenstein non-Gorenstein ring with canonical ideal $\omega_R$. Then the following conditions are equivalent:
\begin{enumerate}[\rm(i)]
\item $R$ has minimal multiplicity.
\item The idealization ring $R \ltimes \fm$ has minimal multiplicity.
\item Every torsion-free $R$-module has minimal multiplicity.
\item $\omega_R$ has minimal multiplicity.
\item The endomorphism algebra $\Hom_R(\fm, \fm)$ has minimal multiplicity as an $R$-module.
\end{enumerate}
\end{cor}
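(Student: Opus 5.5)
We will prove the corollary by reducing each condition to stability of the maximal ideal, using Theorem~\ref{thm-intro} with $I=\fm$.

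\textbf{Setup.} Recall the definition of minimal multiplicity for a module $M$ with respect to an $\fm$-primary ideal $I$, in the one-dimensional Cohen-Macaulay case. A maximal Cohen-Macaulay module $M$ has minimal multiplicity with respect to $I$ if $I^2M=xIM$ for some (general) $x\in I$. Equivalently, the Hilbert--Samuel function attains the lower bound $\ell(M/IM)=\e_I(M)$ up to the standard correction; in Puthenpurakal's formulation, $\mu(IM)=\e_I(M)$. "Minimal multiplicity" without qualification means with respect to $\fm$.

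\textbf{(i) $\Leftrightarrow$ (iii) $\Leftrightarrow$ (iv).} It is classical (Abhyankar, Sally) that a one-dimensional Cohen-Macaulay ring has minimal multiplicity if and only if $\fm^2=x\fm$ for some $x\in\fm$, i.e.\ $\fm$ is stable. (If the residue field is finite, we first pass to $R[t]_{\fm R[t]}$, which preserves all the conditions.) Next we check that Theorem~\ref{thm-intro} applies to $I=\fm$. Since $R$ is not a DVR, $\fm$ is a trace ideal: $(R:_{\rm Q}\fm)=(\fm:_{\rm Q}\fm)$, because any $f\in(R:\fm)$ with $f\fm\not\subseteq\fm$ would force $f\fm=R$, making $\fm$ invertible and hence principal. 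In fact $\fm$ is also reflexive. Theorem~\ref{thm-intro} then gives directly the equivalence of (i), (iii) and (iv).

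\textbf{(i) $\Leftrightarrow$ (ii).} The idealization $A=R\ltimes\fm$ is one-dimensional Cohen-Macaulay, with maximal ideal $\fm\oplus\fm$. We compute
\[
\e(A)=\e(R)+\e_{\fm}(\fm)=2\e(R),
\qquad
\edim A=\mu_R(\fm)+\mu_R(\fm)=2\edim R,
\]
where the second formula uses $\mathfrak{n}^2=\fm^2\oplus\fm^2$ for $\mathfrak n=\fm\oplus\fm$. Hence $\e(A)=\edim A$ if and only if $\e(R)=\edim R$. For this I expect the paper's Proposition~\ref{prop-min-mul} to record exactly this computation.

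\textbf{(i) $\Leftrightarrow$ (v).} Put $B=\Hom_R(\fm,\fm)=(\fm:_{\rm Q}\fm)$, which is torsion-free. So (i) implies (v) by (iii). For the converse, note that $B=(R:_{\rm Q}\fm)=\fm^{*}$. I would use Lemma~\ref{m-dual}, presumably relating the minimal multiplicity of $\fm^*$ to that of $\fm$, or to $\mu(\fm B)=\e(B)=\e(R)\cdot\operatorname{rank}$. Concretely, $\fm B=\fm$ since $B=(\fm:\fm)$. Thus minimal multiplicity of $B$ says $\fm^2B=x\fm B$, i.e.\ $\fm^2=x\fm$, which is stability of $\fm$.

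\textbf{Main obstacle.} The key point is precisely this identification of $\fm B$ with $\fm$, which needs $R$ to be non-DVR. Beyond that, the only hard step is the general result, Theorem~\ref{thm-intro} (in particular (iii)$\Rightarrow$(i)); here it is taken as given, so the corollary reduces to the checks above.
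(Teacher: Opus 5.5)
Your proposal is correct and follows the paper's route: (i)$\Leftrightarrow$(ii) is the idealization count of Proposition~\ref{prop-min-mul} specialized to $M=\fm$, and (i)$\Leftrightarrow$(iii)$\Leftrightarrow$(iv) is Theorem~\ref{thm-intro} with $I=\fm$. You invoke that $\fm$ is a trace ideal, whereas Corollary~\ref{corchar2} invokes its reflexivity via~\ref{maxref}; either works. For (v) you argue directly that $\fm(\fm:_{\rm Q}\fm)=\fm$ turns $\fm^2B=x\fm B$ into $\fm^2=x\fm$, which is the same key fact behind Lemma~\ref{m-dual} (there routed through $\fm\fm^{\ast}\cong\fm$ and the criterion $\e_R(M)=\mu_R(\fm M)$), and contrary to your closing remark this step does not actually need $R$ to be non-regular.
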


Now we briefly discuss the organization of the paper. In Section~\ref{D-M}, we give several examples of modules of minimal multiplicity to illustrate various properties. For example, we use the conductor of the ring to produce new examples, and exhibit a module $M$ that does not have minimal multiplicity, while its submodule $\fm M$ does; see Examples~\ref{Arfmax} and~\ref{e1}. We provide a characterization of the minimal multiplicity of idealizations and observe that (co)syzygies of modules of minimal multiplicity do not necessarily have minimal multiplicity; see Proposition~\ref{prop-min-mul} and Example~\ref{Arfsyz}. It is known that every Ulrich module has minimal multiplicity; see, for example,~\cite[2.8]{Souvik1}. On the other hand, there are modules of minimal multiplicity that are not Ulrich; see Example~\ref{minmulnotulrich}. As a related result, we show in Proposition~\ref{reduction} that, if $r$ is the reduction number of a given non-principal $\fm$-primary ideal $I$, then $I^{r-1}$ has minimal multiplicity with respect to $I$, but it is not an $I$-Ulrich $R$-module. We also compare the class of modules of minimal multiplicity with that of Burch modules and corroborate the fact that these classes differ in general; see~\ref{Burch-facts}. 

Section~\ref{section-MMB} is devoted to one of our contributions, namely, identifying certain classes of modules and ideals for which the Ulrich property and minimal multiplicity are equivalent; see Theorem~\ref{AGchar}. In Section~\ref{main-section} we give a proof of Theorem~\ref{thm-intro}; see also Theorem~\ref{corchar1}. In~\ref{Aseq} of the appendix, we establish an exact sequence involving modules of minimal multiplicity, which appears to be new. Using this exact sequence, we provide an alternative proof of the characterization of modules of minimal multiplicity stated in~\ref{corchar}. A second application of the same exact sequence, concerning the vanishing of certain $\Tor$ modules, is presented in~\ref{cx}.

\section{Definitions, Examples, and Remarks} \label{C-E-R}  \label{D-M}

We proceed by recalling our setup:

\begin{chunk} \label{setup} Throughout, unless otherwise stated,  $R$ denotes a \emph{one-dimensional commutative Noetherian Cohen-Macaulay local ring} with maximal ideal $\fm$ and residue field $k$. Moreover, we assume all $R$-modules are finitely generated. We also assume that there is always a minimal reduction of every $\fm$-primary ideal $I$ of $R$; this holds, for example, if the residue field $k$ of $R$ is infinite~\cite[8.3.7]{HunekeSwanson}. 
\end{chunk}

Puthenpurakal~\cite{TonyP} introduced the notion of minimal multiplicity for modules as follows:

\begin{chunk} \label{MM} \label{mm} Let $R$ be a ring, $I$ be an $\fm$-primary ideal of $R$, and let $M$ be a nonzero torsion-free $R$-module. Then $M$ is said to have \emph{minimal multiplicity with respect to $I$} if and only if the equality $\e_R(I,M)=\len_R(IM/I^2M)$ holds. Here,
$\e_R(I,M)$ denotes the (Hilbert-Samuel) multiplicity of $M$ with respect to $I$, and $\len_R(IM/I^2M)$ denotes the length of $IM/I^2M$ over $R$; see~\cite[Defn. 15]{TonyP}. If $M$ has minimal multiplicity with respect to $\fm$, we simply say $M$ has \emph{minimal multiplicity}. Note that, if $M=R$ and $M$ has minimal multiplicity, then the multiplicity of $R$ equals its embedding dimension, that is, $R$ is a ring of minimal multiplicity in the sense of Abhyankar~\cite{Abhyankar}.
\end{chunk}

\begin{chunk} \label{B} \label{Ulrich} Let $R$ be a ring, $I$ be an $\fm$-primary ideal of $R$, and let $M$ be a torsion-free $R$-module. Assume $(x)$ is a minimal reduction of $I$.  
\begin{enumerate}[\rm(i)]
\item It follows that $M$ is a Cohen-Macaulay $R$-module, $(x)$ is a parameter ideal for $M$, and $\e_R(I,M)=\e_R\left((x),M\right)=\len_R(M/xM)$; see, for example,~\cite[4.6.5, 4.7.9, and 4.7.11]{BH}.
\item Note that $M/IM$ has finite length and so $\e_R(I, M/IM)=0$. Thus 
\[
\e_R(I, M)=\e_R(I,IM)+\e_R(I, M/IM)=\e_R(I,IM).
\]
Moreover, by part (i), we have that $\e_R(I, IM)=\len_R(IM/xIM)$. Therefore, 
\[
\e_R(I, M)=\e_R(I,IM)=\len_R(IM/xIM).
\]
\item Assume $I^2M=xIM$. Then $\len_R(IM/xIM)=\len_R(IM/I^2M)$ and so, due to part (ii), $M$ has minimal multiplicity with respect to $I$.
\item Assume $M$ has minimal multiplicity with respect to $I$. Then, in view of Definition~\ref{MM} and part (ii), we have that $\len_R(IM/I^2M)=\e_R(I, M)=\len_R(IM/xIM)$. So, $I^2M=xIM$.
\end{enumerate}
\end{chunk}

Parts (iii) and (iv) of \ref{B} yield the following useful characterization of when a torsion-free module has minimal multiplicity with respect to an ideal:

\begin{chunk} \label{mm-defn} Let $R$ be a ring, $I$ be an $\fm$-primary ideal of $R$, and let $M$ be a torsion-free $R$-module. Then $M$ has minimal multiplicity with respect to $I$ if and only if $I^2M=xIM$ for every minimal reduction $(x)$ of $I$ if and only if $I^2M=yIM$ for some minimal reduction $(y)$ of $I$.
\end{chunk}

In the following, we discuss how our interest in studying modules of minimal multiplicity developed, including the initial motivation and key inspirations.

\begin{diss} Our motivation to study modules of minimal multiplicity comes from the notion of Arf property. Recall that $R$ is called an \emph{Arf ring} if every integrally closed $\fm$-primary ideal $I$ of $R$ is stable, that is,  $I^2=xI$ for some $x\in I$; see~\cite[2.2]{Lipman}. Our aim was to extend the Arf property from rings to modules, and for that we defined an Arf module\footnote{We initially defined Arf modules independently of~\cite{TonyP}, before its relevance to our work became apparent, and prior to the appearance of the manuscripts~\cite{Souvik2, Souvik1}.} as follows: Given a nonzero torsion-free $R$-module $M$ and an $\fm$-primary ideal $I$ of $R$, we say $M$ is \emph{$I$-Arf} (or Arf with respect to $I$) provided that there exists a minimal reduction $(x)$ of $I$ such that $I^2M=xIM$. According to our definition, the following implications hold:  $R$ is an Arf ring if and only if $R$ is an $I$-Arf $R$-module for each integrally closed $\fm$-primary ideal $I$ of $R$ if and only if each torsion-free $R$-module is an $I$-Arf $R$-module for each integrally closed $\fm$-primary ideal $I$ of $R$.

It is clear by \ref{mm-defn} that $M$ is $I$-Arf if and only if $M$ has minimal multiplicity with respect to $I$. In case the ideal $I$ in question has no minimal reductions, these two definitions do not agree. For example, if $R=\ZZ_2[\![x,y]\!]/\left(xy(x+y)\right)$ and $M=\fm^n$ for some $n\gg 0$, then $M$ has minimal multiplicity (since it is Ulrich), but $\fm$ has no reductions so Arf module property is undefined; see~\cite[2.1]{BHU},~\cite[8.3.2]{HunekeSwanson}, and Remark~\ref{02}. As we will need to assume the existence of minimal reductions simply to define Arf-modules with respect to $\fm$-primary ideals, we will be working with nothing but modules of minimal multiplicity. For this reason, we will keep using the previously defined terminology, namely modules of minimal multiplicity~\cite{TonyP}. 

When we introduced the notion of an Arf module with respect to an ideal $I$, we argued that the defining equality is independent of the choice of a minimal reduction of $I$. This can be seen from~\cite[Thm. 2.9 (4)$\Leftrightarrow$(5)]{RV} (we make use of the theorem by setting $J=(x)$, $M_1=IM$, and $M_2=I^2M$). On the other hand, it is worth noting that the equality $I^2M=xIM$ may depend on the choice of a minimal reduction of $I$ over higher dimensional rings. For example, there exists a two-dimensional Cohen-Macaulay local ring $R$ and parameter ideals $\fkq$ and $\fkq'$ of $R$ such that $\fkq$ and $\fkq'$ are reductions of $I$, $I^3=\fkq I^2$, $I^3\ne \fkq' I^2$, and $I^4=\fkq' I^3$. Thus, the equality $I^3 M = \fkq I^2 M$ depends on the choice of $\fkq$; see~\cite[3.3]{Marley}.
\end{diss}

The following fact can be seen from~\cite[4.1, 4.2, and 4.4]{Souvik2}; see also~\ref{mm} and~\ref{mm-defn}. 

\begin{chunk} \label{corchar} Let $M$ be a nonzero torsion-free $R$-module. Then the following conditions are equivalent:
\begin{enumerate}[\rm(i)]
\item $\fm^i M$ has minimal multiplicity for all $i\geq 0$.
\item $M$ has minimal multiplicity.
\item $\e_R(M)=\mu_R(\fm M)$. 
\item $\fm^2 M \cong \fm M$.
\end{enumerate}
\end{chunk}

An independent proof of~\ref{corchar} is given in~\autoref{apa}, using a certain short exact sequence established in~\ref{Aseq}. The characterization presented in~\ref{corchar} is used in Lemma~\ref{m-dual}, Example~\ref{Arfsyz}, and~\ref{Burch-facts}. We proceed by recording an example of a module of minimal multiplicity using~\ref{corchar}; several further such examples will be constructed later in the sequel. 

\begin{eg} Let $R=k[\![t^5, t^6, t^7]\!]$ and consider the ideal $M=(t^{10}, t^{11}, t^{12}, t^{13})$ of $R$. It follows that $\fm M=(t^{15}, t^{16}, t^{17}, t^{18}, t^{19})$ and hence $\mu_R(\fm M)=5$. Since $\e_R(M)=\e(R)=5$, we conclude by~\ref{corchar} that the $R$-module $M$ has minimal multiplicity.
\end{eg}

The next remark gathers some useful observations about stable ideals and minimal multiplicity for torsion-free modules, which will come up again later.

\begin{rmk} \label{SI} Let $R$ be a ring.
\begin{enumerate}[\rm(i)]
\item Let $I$ be an $\fm$-primary ideal of $R$ and let $M$ be a torsion-free $R$-module. Assume $I$ is a \emph{stable} ideal, that is, $I^2=yI$ for some $y\in I$; see~\cite[1.3]{Lipman}. Then $(y)$ is a minimal reduction of $I$ and $I^2M=yIM$. Thus, $M$ has minimal multiplicity with respect to $I$.
\item Note that $\fm$ is stable if and only if $R$ has minimal multiplicity, that is, $\e(R)=\mu_R(\fm)$. Thus, if $R$ has minimal multiplicity, then every torsion-free $R$-module has minimal multiplicity; see part~(i).
\item Assume $R$ is Arf, for example, $\e(R)\leq 2$. Then $R$ has minimal multiplicity, and thus every torsion-free $R$-module has minimal multiplicity; see~\cite[page 664]{Lipman} and part~(ii).
\item If $R$ has minimal multiplicity as an $R$-module, then it follows from the definition that $\fm$ is stable, and hence $R$ has minimal multiplicity as a ring; see part (ii). So, every torsion-free $R$-module has minimal multiplicity if and only if $R$ has minimal multiplicity (as a ring).
\end{enumerate}
\end{rmk}

The following result can be used to obtain stable ideals and modules of minimal multiplicity:

\begin{chunk} \label{prestable} Let $R$ be a ring and let $I$ be an $\fm$-primary ideal of $R$. Assume $I^{n}$ is generated by $n$-elements for some $n\geq 2$. Then $I^{n-1}$ is a stable ideal; see~\cite[Cor. 1]{PSI}.  
\end{chunk}

\begin{eg} Let $R=k[\![t^5, t^6, t^7, t^8]\!]$ and $I=(t^5, t^7)$. Then $I^3=(t^{15}, t^{17}, t^{19})$. Thus, $I^{2}$ is a stable ideal so that every torsion-free $R$-module has minimal multiplicity with respect to $I^{2}$; see Remark~\ref{SI}(i) and~\ref{prestable}. 
\end{eg}

Next, we recall the definition of Ulrich modules and record a few related facts. We study the relation between Ulrich modules and those of minimal multiplicity in Section~\ref{section-MMB} in more detail.

\begin{chunk} [{\cite[4.1]{DMS}}] \label{UM} \label{min-Ulrich} Let $R$ be a ring, $I$ be an $\fm$-primary ideal of $R$, and let $M$ be a nonzero torsion-free $R$-module. Then $M$ is called an \emph{$I$-Ulrich} provided that $\e_R(I,M)=\len_R(M/IM)$. If $M$ is $\fm$-Ulrich, we simply say $M$ is an \emph{Ulrich} $R$-module. Note that $\fm$ is an Ulrich $R$-module if and only if $e(R)=\e_R(\fm, \fm)=\len_R(\fm/\fm^2)=\mu_R(\fm)$. Therefore, $\fm$ is an Ulrich $R$-module if and only if $R$ has minimal multiplicity (as a ring).
\end{chunk}

As pointed out in~\cite[2.8]{Souvik1}, every Ulrich module has minimal multiplicity: 

\begin{rmk} \label{02} Let $R$ be a ring, $I$ be an $\fm$-primary ideal of $R$, and let $M$ be a torsion-free $R$-module. Assume $(x)\subseteq I$ is a minimal reduction of $I$. It follows from~\ref{B}(i) that:
\begin{align}
\notag{} M \textnormal{ is $I$-Ulrich } \Longleftrightarrow  \len_R(M/IM)=\e_R(I,M)=\len_R(M/xM) \Longleftrightarrow IM=xM.
\end{align}
Therefore, if $M$ is $I$-Ulrich, then $I^2M=xIM$ so that $M$ has minimal multiplicity with respect to $I$; see~\ref{B}(iv).
\end{rmk}

\begin{rmk} \label{mA} Let $R$ be a ring and let $M$ be a nonzero torsion-free $R$-module. If $(x)$ is a minimal reduction of $\fm$, then the following implications hold:
\begin{align} \label{mA1} 
\tag{\ref*{mA}.1}  
M \text{ has minimal multiplicity } 
&\Longleftrightarrow  \fm^2M = x \fm M   
\Longleftrightarrow  \fm (\fm M) = x(\fm M)   \notag\\
&\Longleftrightarrow  \fm M \text{ is Ulrich }   \notag
\Longleftrightarrow  \fm^2 M \cong \fm M.
\end{align}
Here, in \eqref{mA1}, the last implication is due to~\cite[4.6]{DMS}. Next, we use \eqref{mA1} and note:
\begin{align} \label{mA2} 
\tag{\ref*{mA}.2}  
M \text{ has minimal multiplicity }  
\Longrightarrow \fm M \text{ is Ulrich}  
&\Longrightarrow \fm^2 M \text{ is Ulrich}  \\  \notag{} 
&\;\;\;\;\;\;\;\;\;\;\;\;\;\; \vdots  \\  \notag{} 
&\Longrightarrow \fm^i M \text{ has minimal multiplicity}.
\end{align}
\end{rmk}

In what follows, we note that the $R$-module $\fm$ has minimal multiplicity (with respect to $\fm$) whenever the multiplicity of $R$ is at most three. 

\begin{rmk} \label{MU} Let $R$ be a ring and set $e=\e(R)$.
\begin{enumerate}[\rm(i)]
\item It follows that $\fm^{e-1}$ is an Ulrich $R$-module; see~\cite[2.1]{BHU}. Thus, by~\ref{Ulrich}, $\fm^{e-1}$ has minimal multiplicity. 
\item If $e=3$, then $\fm^2$ is an Ulrich $R$-module so that $\fm$ has minimal multiplicity; see part~(i) and~\eqref{mA2}. For example, if $R=k[\![t^3, t^5]\!]$, then $e=3$ and $\fm$ has minimal multiplicity, but $R$ does not have minimal multiplicity.
\item If $e\leq 3$, then $\fm$ has minimal multiplicity; see parts~(i) and~(ii).
\end{enumerate}
\end{rmk}  

In contrast to Remark~\ref{MU}, there exist rings of multiplicity four whose maximal ideal does not have minimal multiplicity. For example, if $R = k[\![t^4, t^5, t^{11}]\!]$, then 
$\fm^2 = (t^8, t^9, t^{10})$ and $\e(R) = 4 = \e_R(\fm) \neq 3 = \mu_R(\fm^2)$ so that $R$-module $\fm$ does not have minimal multiplicity. Here, $R$ is not Gorenstein; see \ref{Burch-facts}(iv) for a similar example involving a Gorenstein ring.

The converse of the fact stated in \eqref{mA2} may fail; that is, even if a torsion-free $R$-module $M$ does not have minimal multiplicity, $\fm M$ still can have minimal multiplicity.

\begin{eg} \label{Arfmax} \label{eg1} Let $R=k[\![t^4, t^5, t^6]\!]$. 
\begin{enumerate}[\rm(i)]
\item Set $\fq=(t^4)$. Then $\fm=\fq+(t^5, t^6)$ so that $\fm^2$ is equal to
\begin{align} \notag{}
\fq\fm+(t^5, t^6)\fq+(t^5, t^6)^2
&=\fq\fm+(t^5, t^6)^2   
=(t^8, t^9, t^{10})+(t^{10}, t^{11}, t^{12})=(t^8, t^9, t^{10}, t^{11}).   \notag{}
\end{align}
Also, $\fm^2=\fq\fm+(t^{11})$ so that $\fm^3=\fq\fm^2+(t^{11})\fm$. Note that $(t^{11})\fm=(t^{15},t^{16},t^{17})\subseteq \fq\fm^2$.
Thus, $\fm^3=\fq\fm^2$, that is, $\fq$ is a minimal reduction of $\fm$. So, $\fm^{n+2}=\fq\fm^{n+1}$ for all $n\geq 1$. Thus, if $M$ is a torsion-free $R$-module, then $\fm^2 (\fm^n M)=\fq \fm (\fm^n M)$, and hence $\fm^n M$ has minimal multiplicity for all $n\geq 1$; see (\ref{mA}.1). In particular, letting $M=R$ and $n=1$, we conclude that $\fm$ has minimal multiplicity.
\smallskip

\item Let $M=R+Rt^7$. Then $M$ is not a free $R$-module. Note that $\fm \subseteq \fm M \subseteq M$ since $1\in M$. Moreover, we see $\fm M=(t^4, t^5, t^6)(1, t^7)\subseteq \fm$ so that $\fm M=\fm$. Thus, $\fm M$ has minimal multiplicity by part (i). On the other hand, if $M$ has minimal multiplicity, then (\ref{mA}.2) implies that $\fm M=\fm$ is Ulrich, and hence $R$ has minimal multiplicity. As $R$ does not have minimal multiplicity, we conclude that $M$ does not have minimal multiplicity.
\end{enumerate}
\end{eg}

\begin{rmk} Recall that, if $R$ is a ring and an $\fm$-primary ideal $I$ of $R$ is stable, then every torsion-free $R$-module has minimal multiplicity with respect to $I$; see~\ref{SI}(i). The converse of this fact is not necessarily true. In Example~\ref{Arfmax}, $\fm$ is not a stable ideal, but the $R$-module $\fm$ has minimal multiplicity; see~\ref{SI}(ii).
\end{rmk}

It is noted in \ref{02} that every Ulrich module has minimal multiplicity. On the other hand, as we point out by the next example, there exist nonzero modules of minimal multiplicity that are not Ulrich.

\begin{eg} \label{minmulnotulrich} Let $R=k[\![t^4, t^5, t^6]\!]$. We know by Example~\ref{Arfmax} that $\fm$ has minimal multiplicity (with respect to $\fm$). On the other hand, since $R$ does not have minimal multiplicity, $\fm$ is not an Ulrich $R$-module; see Remark~\ref{min-Ulrich}. 
\end{eg}

We can construct various examples of modules as in Example~\ref{minmulnotulrich}; our next result shows how to do this.

\begin{prop} \label{reduction} Let $R$ be a ring, $I$ be a non-principal $\fm$-primary ideal of $R$, and let $(x)$ be a minimal reduction of $I$.
Set $r=\min\{n\geq 0: I^{n+1}=x  I^{n}\}$. If $M$ is a torsion-free $R$-module, then $I^{r-1}M$ has minimal multiplicity with respect to $I$. Moreover, $I^{r-1}$ is not an $I$-Ulrich $R$-module. 
\end{prop}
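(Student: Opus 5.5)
First observe that $r\ge 1$. If $r=0$, then $I=xR$ is principal, contrary to the hypothesis. Hence $I^{r-1}$ makes sense, with $I^0=R$.

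For the first claim, set $N=I^{r-1}M$. Since $N$ is a submodule of the torsion-free module $M$, it is torsion-free; it is nonzero whenever $M\neq 0$, because $I$ contains a nonzerodivisor. By~\ref{mm-defn} it suffices to check that $I^2N=xIN$. This amounts to
\[
I^{r+1}M = x I^{r}M,
\]
which follows by multiplying the defining equality $I^{r+1}=xI^{r}$ by $M$. So $N$ has minimal multiplicity with respect to $I$.

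For the second claim, suppose toward a contradiction that $I^{r-1}$ is $I$-Ulrich. By Remark~\ref{02}, applied with $M=I^{r-1}$ (torsion-free, being an ideal containing a nonzerodivisor), this means $I\cdot I^{r-1}=xI^{r-1}$, that is, $I^{r}=xI^{r-1}$. This contradicts the minimality of $r$ when $r-1\ge 0$, which holds since $r\ge 1$. Note that in the case $r=1$ the statement says $R$ is not $I$-Ulrich, i.e. $I\neq xR$, which is again non-principality. The only point needing care is thus the remark $r\geq1$ and the applicability of \ref{02}; there is no real obstacle.
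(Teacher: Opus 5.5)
Your proof is correct and follows essentially the same argument as the paper. You get $r\ge 1$ from non-principality, multiply $I^{r+1}=xI^{r}$ by $M$ to obtain $I^2(I^{r-1}M)=xI(I^{r-1}M)$ and conclude via \ref{mm-defn}, and use Remark~\ref{02} to turn $I$-Ulrichness of $I^{r-1}$ into $I^{r}=xI^{r-1}$, which contradicts the minimality of $r$.
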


\begin{proof} As $I$ is not principal, we have that $r\geq 1$. So $I^{r+1}=x I^{r}$, that is, $I^2  I^{r-1}=x I I^{r-1}$, and so $I^{r-1}M$ is an $I$-Arf module. On the other hand, if $I^{r-1}$ is an $I$-Ulrich $R$-module, then \ref{Ulrich} implies that $I^r=II^{r-1}=xI^{r-1}$. However, this equality contradicts the minimality of $r$. Therefore, $I^{r-1}$ is not an $I$-Ulrich module.
\end{proof}

In general, a torsion-free module $M$ over a ring $R$ may have minimal multiplicity (with respect to $\fm$), but not necessarily with respect to other $\fm$-primary ideals. We give such an example next:

\begin{eg} \label{hepArf} Let $R=k[\![t^3, t^7, t^{11}]\!]$. 
\begin{enumerate}[\rm(i)]
\item Let $I=\overline{(t^6)}$ be the integral closure of the ideal of $R$ generated by $t^6$. It follows from~\cite[1.5.2 and 1.6.1]{HunekeSwanson} that 
\[
I=\overline{(t^6)}=\overline{(t^6)\overline{R}} \cap R=(t^6)\overline{R} \cap R=t^6k[\![t]\!]\cap R=(t^6, t^7, t^{11}).
\]
Moreover, 
\[
I^2=(t^{12},t^{13}, t^{14}, t^{17}, t^{18}, t^{22})\neq (t^{12},t^{13}, t^{17})=t^6I.
\]
This implies that $I$ is not a stable ideal of $R$.
\item If $J$ is an $\fm$-primary ideal of $R$, then $R$ has minimal multiplicity with respect to $J$ if and only if $J$ is a stable ideal. So, if $I$ is as in part (i), then $R$ does not have minimal multiplicity with respect to $I$. On the other hand, $R$ has minimal multiplicity with respect to $\fm$ because the ring $R$ has minimal multiplicity.
\end{enumerate}
\end{eg}

\begin{rmk} \label{conductor} Let $R$ be a generically Gorenstein non-Gorenstein ring with a canonical ideal $\omega_R$ and total ring of fractions $\rm Q$. Pick a minimal reduction $(x)$ of $\omega_R$ 
and set $K=x^{-1}\omega_R\subseteq Q$. Then $K$ is the fractional canonical ideal of $R$. Now, consider the \emph{conductor} $\con$ of $R$ in $R[K]$, namely $\con=(R:_{\rm Q}R[K])$. Note that $\con=\con R\subseteq \con K$ since $R\subseteq K$. Also, $\con K \subseteq \left(R:_QR[K]\right)=\con$. Thus, $\con K =\con$, and hence $\con \cdot \omega_R = \con \cdot x$. So, $\con$ is $\omega_R$-Ulrich. Therefore, $\con$ has minimal multiplicity with respect to $\omega_R$; see Remark~\ref{02}. 

In fact, this argument remains valid not only for $\omega_R$, but for all $\fm$-primary ideals $I$ of $R$. More precisely, if $I$ is such an ideal of $R$, $(x)$ is a minimal reduction of $I$, and $K=x^{-1} I \subseteq Q$, then $\left(R:_QR[K]\right)$ is $I$-Ulrich, and thus has minimal multiplicity with respect to $I$.
\end{rmk}

We use Remark \ref{conductor} and compute new examples of modules of minimal multiplicity.

\begin{eg} \label{e1} $\phantom{}$
\begin{enumerate}[\rm(i)]
\item Let $R=k[\![t^3, t^7, t^{8}]\!]$. Note that $K=R+Rt \cong \omega_R$ and so $R[K]=R[t]=k[\![t]\!]=\overline{R}$, the integral closure of $R$ in $Q$. Then $\con=(R:_Q\overline{R})=t^6\overline{R}=(t^6, t^7, t^8)R$. Therefore, by Remark \ref{conductor}, the ideal $(t^6, t^7, t^8)R$ is $\omega_R$-Ulrich, and hence has minimal multiplicity with respect to $\omega_R$.
\item Let $R=k[\![t^4, t^9, t^{11}, t^{14}]\!]$. Then $$K=R+Rt^3+Rt^5\cong \omega_R \text{ and } R[K]=R+Rt^3+Rt^5+Rt^6.$$ 
It follows that $(R:_RRt^3)=(t^8, t^9, t^{11}, t^{14})R$ and $ (R:_RRt^5)= (R:_RRt^6)= (t^4, t^9, t^{11}, t^{14})R$. This implies that:
\begin{align}
\notag  \con=(R:_QR[K])& =(R:_RRt^3) \cap (R:_RRt^5) \cap (R:_RRt^6) \\ & \notag{} = (t^8, t^9, t^{11}, t^{14})R \cap (t^4, t^9, t^{11}, t^{14})R\\ & = \notag{} (t^8, t^9, t^{11}, t^{14})R
\end{align}
Therefore, by Remark \ref{conductor}, the ideal $(t^8, t^9, t^{11}, t^{14})R$ is $\omega_R$-Ulrich, and has minimal multiplicity with respect to $\omega_R$. 
\end{enumerate}
It seems worth noting that, $R[K]=\overline{R}$ for the ring $R$ in part (i); on the other hand, for the ring $R$ in part (ii), we have that $R[K]\neq \overline{R}$ and $\con\neq(R:_Q\overline{R})=(t^{11}, t^{12}, t^{13}, t^{14})R$.
\end{eg}

\subsection*{Minimal multiplicity over idealizations}

In this subsection, we provide a characterization of the minimal multiplicity of
idealizations \(A = R \ltimes M\) in terms of the minimal multiplicity of the ring
\(R\) and the \(R\)-module \(M\). Furthermore, we point out that an idealization
may fail to have minimal multiplicity, even when \(M\) does as an \(R\)-module.

We recall the relevant definition: Given an \( R \)-module \( M \), we denote by \( R \ltimes M \) the \emph{idealization} of \( M \) over \( R \). As an abelian group, \( R \ltimes M \) is just \( R \oplus M \), but the multiplication is defined by $(a, x)\cdot (b, y) = (ab, ay + bx)$ for all \( (a, x), (b, y) \in R \ltimes M \). Note that $0 \ltimes M$ is an ideal of $R \ltimes M$, $(0 \ltimes M)^2=0$, and $R \cong (R \ltimes M)/(0 \ltimes M)$. As we assume $R$ is a one-dimensional Cohen-Macaulay local ring, if $M$ is a torsion-free $R$-module, then $R \ltimes M$ is a one-dimensional Cohen-Macaulay local ring with maximal ideal $\fm \ltimes M$ and $(R \ltimes M)/(\fm \ltimes M) \cong R/\fm$; see, for example,~\cite[Page 2]{Nagata}.

The following properties follow from the definitions:

\begin{chunk} \label{lem-min-mul} Let $R$ be a ring, $M$ be a torsion-free $R$-module, and let $A=R \ltimes M$ be the idealization of $M$ over $R$. Then the following hold:
\begin{enumerate}[\rm(i)]
\item $\edim(A)=\edim(R)+\mu_R(M)$.
\item $\e(A)=\e(R)+\e_R(M)$.
\end{enumerate}
\end{chunk}

\begin{prop} \label{prop-min-mul} Let $R$ be a ring, $M$ be a torsion-free $R$-module, and let $A=R \ltimes M$ be the idealization of $M$ over $R$. Then the following conditions are equivalent:
\begin{enumerate}[\rm(i)]
\item The ring $A$ has minimal multiplicity.
\item Each torsion-free $A$-module has minimal multiplicity.
\item The ring $R$ has minimal multiplicity and $M$ is an Ulrich $R$-module.
\end{enumerate}
\end{prop}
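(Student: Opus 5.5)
The equivalence (i)$\Leftrightarrow$(ii) will come directly from Remark~\ref{SI}(iv) applied to the ring $A$ in place of $R$. By the discussion preceding~\ref{lem-min-mul}, $A$ is a one-dimensional Cohen-Macaulay local ring. Its residue field is $k$, so minimal reductions exist and the standing hypotheses of~\ref{setup} hold for $A$. Remark~\ref{SI}(iv) then says that every torsion-free $A$-module has minimal multiplicity if and only if $A$ has minimal multiplicity as a ring. This settles (i)$\Leftrightarrow$(ii), and the real content is (i)$\Leftrightarrow$(iii).

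For (i)$\Leftrightarrow$(iii) I plan a numerical argument based on~\ref{lem-min-mul}. By Abhyankar's inequality, applied to $R$ (and the analogous bound for the Cohen-Macaulay module $M$), we always have $\e(R)\ge \edim(R)$ and $\e_R(M)\ge \mu_R(M)$. The second inequality holds because, for a minimal reduction $(x)$ of $\fm$, we get $\e_R(M)=\len_R(M/xM)\ge \len_R(M/\fm M)=\mu_R(M)$ by~\ref{B}(i). Moreover, equality $\e_R(M)=\mu_R(M)$ holds precisely when $xM=\fm M$, that is, precisely when $M$ is Ulrich, by Remark~\ref{02}. Adding the two identities in~\ref{lem-min-mul} gives
\[
\e(A)-\edim(A)=\bigl(\e(R)-\edim(R)\bigr)+\bigl(\e_R(M)-\mu_R(M)\bigr).
\]
Both summands on the right are nonnegative. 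Hence $\e(A)=\edim(A)$ if and only if both summands vanish, i.e., if and only if $R$ has minimal multiplicity and $M$ is Ulrich. This is exactly (i)$\Leftrightarrow$(iii).

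The main obstacle is verifying~\ref{lem-min-mul}, which the paper states as following from the definitions. Part (i) holds because the maximal ideal of $A$ is $\fm\ltimes M$, and its square is $\fm^2\ltimes \fm M$. So $(\fm\ltimes M)/(\fm\ltimes M)^2\cong \fm/\fm^2\oplus M/\fm M$, and taking $k$-dimensions gives the formula. Part (ii) holds because, for a minimal reduction $(x)$ of $\fm$, the element $(x,0)$ is a parameter (indeed a reduction) of $\fm\ltimes M$. Then $A/(x,0)A\cong R/xR\oplus M/xM$ as $R$-modules, and since $A$ is Cohen-Macaulay of dimension one, $\e(A)=\len(A/(x,0)A)=\e(R)+\e_R(M)$. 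I will briefly check that $(x,0)$ is a reduction: $(\fm\ltimes M)^{n+1}=\fm^{n+1}\ltimes \fm^nM$ and $(x,0)(\fm\ltimes M)^n=x\fm^n\ltimes x\fm^{n-1}M$, and these agree for $n\gg0$ since $(x)$ is a reduction of $\fm$. With this in hand, the rest of the proof is the bookkeeping above.
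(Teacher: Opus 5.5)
Your proof is correct and is essentially the paper's argument: (i)$\Leftrightarrow$(ii) follows from Remark~\ref{SI} applied to $A$, and (i)$\Leftrightarrow$(iii) follows from the identities in~\ref{lem-min-mul} together with the nonnegativity of $\e(R)-\edim(R)$ and $\e_R(M)-\mu_R(M)$; in addition, you verify~\ref{lem-min-mul}, which the paper leaves to the definitions. One small correction: a minimal reduction of $\fm\ltimes M$ does not exist merely because the residue field of $A$ is $k$ (since $k$ need not be infinite), but it is supplied by your own computation showing that $(x,0)$ generates a reduction of $\fm\ltimes M$.
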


\begin{proof} The equivalence of parts (i) and (ii) follow by Remark~\ref{SI}(ii).

To show part (i) implies part (iii), we assume that $A$ has minimal multiplicity (as a ring). Then, by~\ref{lem-min-mul}, we have:
\[
\edim(R)+\mu_R(M)=\edim(A)=\e(A)=\e(R)+\e_R(M).
\]
This yields:
\[
0\leq \e(R)-\edim(R)= \mu_R(M)-\e_R(M)\leq 0.
\]
Therefore, $\edim(R)=\e(R)$ and $ \mu_R(M)=\e_R(M)$, that is, the ring $R$ has minimal multiplicity and the $R$-module $M$ is Ulrich.

Next, assume part (iii) holds. Then~\ref{lem-min-mul} implies that
\[
\e(A)=\e(R)+\e_R(M)=\edim(R)+\mu_R(M)=\edim(A).
\]
Therefore, the ring $A$ has minimal multiplicity.
\end{proof}

In the next remark, we use Proposition~\ref{prop-min-mul} to construct 
idealizations with minimal multiplicity and to emphasize that an idealization 
may fail to have minimal multiplicity, even when the module \(M\) does as 
an \(R\)-module.

\begin{rmk} \label{cor-min-mul} \phantom{} Let $R$ be a ring and set $A=R \ltimes R$. Assume $R$ has minimal multiplicity. Then, by Proposition~\ref{prop-min-mul}, the ring $A$ has minimal multiplicity if and only if $R$ is an Ulrich $R$-module, or equivalently, $R$ is regular. Thus, if $R$ is not regular, then $A$ does not have minimal multiplicity as a ring, but it does have minimal multiplicity as an $R$-module; see Remark~\ref{SI}(ii).
\end{rmk}

Recall that $\fm$ is an Ulrich $R$-module if and only if $R$ has minimal multiplicity; see Remark~\ref{min-Ulrich}. So, Proposition~\ref{prop-min-mul} shows that, the ring $R$ has minimal multiplicity if and only if the ring $R \ltimes \fm$ has minimal multiplicity. More generally, if $R$ has minimal multiplicity, then $\Omega_R^i(k)$ is an Ulrich $R$-module for all $i\geq 1$; see~\cite[2.5]{BHU}. Thus, we conclude by Proposition~\ref{prop-min-mul} that: 

\begin{chunk} \label{part-of-intro} Let $R$ be a ring. Then $R$ has minimal multiplicity if and only if the idealization ring $R \ltimes \Omega_R^i(k)$ has minimal multiplicity for all $i\geq 1$.
\end{chunk}

\subsection*{Syzygies of modules of minimal multiplicity} In this subsection, we show, by means of an example, that the (co)syzygy of a module of minimal multiplicity need not have minimal multiplicity in general. We observe this phenomenon in Example \ref{Arfsyz} after some preparation. Recall that, given an $R$-module $M$, we set $M^{\ast}=\Hom_R(M,R)$.

\begin{rmk} \label{2gen-rmk} Let $R$ be a ring with total ring of fractions $\rm Q$, and let $I$ be a fractional ideal of $R$, that is, $I$ is a finitely generated $R$-submodule of $\rmQ$ and $I$ contains an element of $R$ which is a non zero-divisor on $R$. If $\mu_R(I)=2$, then $I^{\ast} \cong \Omega_R(I)$. This fact follows, for example, from~\cite[3.1]{CHSIRW} for the case where $I$ is an ideal of $R$. To establish the general case, pick an element $s\in R$ such that $s$ is a non zero-divisor on $R$ and $sI \subseteq R$. Set $J=sI$. Then $J$ is an ideal of $R$ such that $\mu_R(J)=2$. Moreover, since $I$ contains a non zero-divisor on $R$, and $s$ is a non zero-divisor on $R$, we conclude that $J$ contains a non zero-divisor on $R$. Therefore, there is a short exact sequence of $R$-modules $0 \to J^{\ast} \to R^{\oplus 2} \to J \to 0$; see, for example,~\cite[the proof of 3.1]{CHSIRW}. Since $I \cong J$ as an $R$-module, we obtain a short exact sequence of $R$-modules $0 \to I^{\ast} \to R^{\oplus 2} \to I \to 0$. This implies that $I^{\ast} \cong \Omega_R(I)$.
\end{rmk}

\begin{rmk} \label{2gen} Let $R$ be a ring and set $M=\fm^{\ast}$.
\begin{enumerate}[\rm(i)]
\item As $\fm$ contains a non zero-divisor on $R$, we have that $M = \Hom_R(\fm,R) \cong (R:_{\Q} \fm)$ as $R$-modules, where $(R:_{\Q} \fm)$ is a fractional ideal of $R$; see ~\cite[2.4.2]{HunekeSwanson}. 
\item If $R$ is not regular, then $(R:_{\Q} \fm) = (\fm:_Q\fm)\cong \Hom_R(\fm, \fm)$.
\item If $R$ is not regular, it follows from parts (i) and (ii) that
\[
M=\Hom_R(\fm,R) \cong  (\fm:_Q\fm) \Longrightarrow \fm M=\fm\Hom_R(\fm,R) \cong  \fm (\fm:_Q\fm)=\fm.
\]
\end{enumerate}
\end{rmk}

\begin{lem} \label{m-dual} Let $R$ be a ring. The following conditions are equivalent:
\begin{enumerate}[\rm(i)]
\item $R$ has minimal multiplicity.
\item Every torsion-free $R$-module has minimal multiplicity.
\item $\fm^{\ast}$ has minimal multiplicity.
\end{enumerate}
\end{lem}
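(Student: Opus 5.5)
The equivalence (i)$\Leftrightarrow$(ii) is exactly Remark~\ref{SI}(ii) and~(iv). The implication (ii)$\Rightarrow$(iii) is immediate once we note that $\fm^{\ast}$ is torsion-free: it is a dual module, and by Remark~\ref{2gen}(i) it is isomorphic to the fractional ideal $(R:_{\Q}\fm)\subseteq \Q$. So the work is concentrated in (iii)$\Rightarrow$(i).

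For (iii)$\Rightarrow$(i), assume first that $R$ is not regular, since if $R$ is regular then (i) holds trivially. Set $M=\fm^{\ast}$. By Remark~\ref{2gen}(iii), $\fm M\cong \fm$; more precisely, under $M\cong(\fm:_{\Q}\fm)$ we have $\fm M=\fm(\fm:_{\Q}\fm)=\fm$ as submodules of $\Q$. Now apply~\ref{corchar} to $M$. Condition (ii) there gives (iv) there, namely $\fm^2M\cong\fm M$. Alternatively, use~\eqref{mA1}: $M$ has minimal multiplicity if and only if $\fm M$ is Ulrich. Since $\fm M\cong \fm$ and the Ulrich property is invariant under isomorphism, $\fm$ is an Ulrich $R$-module. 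By~\ref{min-Ulrich}, this is equivalent to $R$ having minimal multiplicity.

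To be careful, I would verify the equality $\fm M=\fm$ inside $\Q$ directly. We have $\fm\subseteq\fm(\fm:_{\Q}\fm)$ because $1\in(\fm:_{\Q}\fm)$, and $\fm(\fm:_{\Q}\fm)\subseteq\fm$ by definition. This makes the step independent of any abstract isomorphism subtleties. The Ulrich condition from~\ref{Ulrich} reads $\fm\cdot\fm=x\fm$ for a minimal reduction $(x)$ of $\fm$. This is the same as $\fm^2M=x\fm M$, which is precisely~\ref{mm-defn} for $M$. So the whole chain is really just the identity $\fm M=\fm$ combined with~\ref{mm-defn}.

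The only point needing attention is the use of $(R:_{\Q}\fm)=(\fm:_{\Q}\fm)$ in Remark~\ref{2gen}(ii), which requires $R$ to be non-regular. That is why the regular case is split off first. I do not expect any real obstacle beyond this case distinction.
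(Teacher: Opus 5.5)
Your proof is correct. It shares the paper's key input, the identity $\fm\,\fm^{\ast}\cong\fm$ from Remark~\ref{2gen}(iii), but concludes differently and more elementarily.

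\textbf{Paper's route.} The paper feeds this identity into the numerical criterion \ref{corchar}(iii). It uses that $\fm^{\ast}$ has rank one to get $\e_R(\fm^{\ast})=\e(R)$. Hence $\fm^{\ast}$ has minimal multiplicity if and only if $\e(R)=\mu_R(\fm\,\fm^{\ast})=\mu_R(\fm)$.

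\textbf{Your route.} You use the literal equality $\fm(\fm:_{\Q}\fm)=\fm$ inside $\Q$ together with the defining criterion \ref{mm-defn}. Then $\fm^2\fm^{\ast}=x\,\fm\,\fm^{\ast}$ becomes $\fm^2=x\fm$, i.e., $\fm$ is stable (equivalently, $\fm$ is Ulrich). That is minimal multiplicity of $R$ by Remark~\ref{SI}(ii).

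\textbf{Comparison.} Your version avoids both the rank/multiplicity computation and the characterization \ref{corchar}, whose proof goes through the appendix exact sequence or external references; it needs nothing beyond \ref{mm-defn}. The paper's version matches the numerical checks it uses elsewhere. You also explicitly dispose of the regular case. The paper's proof tacitly needs that case excluded, since Remark~\ref{2gen}(ii),(iii) assume $R$ is not regular.

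\textbf{Minor point.} Your sentence invoking \ref{corchar}(iv) is left unfinished. You can drop it, since the \eqref{mA1} route already finishes the argument. Alternatively, complete it by noting that $\fm^2\cong\fm$ and applying \ref{corchar} to the module $R$, then Remark~\ref{SI}(iv).
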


\begin{proof} It is enough to assume $\fm^{\ast}$ has minimal multiplicity and see that $R$ has minimal multiplicity; see Remark~\ref{SI}(ii). Set $M=\fm^{\ast}$. The rank of $M$ equals the rank of $\fm$, which is one. Thus, $\e_R(M)=\e(R)$; see~\cite[4.7.9]{BH}. So, it follows from~\ref{corchar} and Remark~\ref{2gen}(iii) that the $R$-module $M$ has minimal multiplicity if and only if $\e(R)=\e_R(M)=\mu_R(\fm M)=\mu_R(\fm)$ if and only if the ring $R$ has minimal multiplicity.
\end{proof}

\begin{eg} \label{Arfsyz} Let $R$ be a ring such that $\e(R)=3$ and $\edim(R)=\mu_R(\fm)=2$. For example, we can pick $R=k[\![t^3,t^4]\!]$. Note that $\fm$ has minimal multiplicity; see~\ref{MU}(iii). Set $M=\fm^{\ast}$. Since $\mu_R(\fm)=2$, there is a short exact sequence of $R$-modules $0 \to M \to R^{\oplus 2} \to \fm \to 0$; see Remark~\ref{2gen-rmk}. This exact sequence implies that $\e_R(M)=2\e(R)-\e_R(\fm)=3$. Since $R$ is not regular, we have that $\fm M \cong \fm$; see Remark~\ref{2gen}(iii). Thus, $\mu_R(\fm M)=2$. This implies that $M$ does not have minimal multiplicity; otherwise,~\ref{corchar} would imply that $2=\mu_R(\fm M)=\e_R(M)=3$.

We know that $\fm^{\ast\ast}\cong \fm$; see \ref{maxref}. Assume $R$ is a (one-dimensional) Gorenstein domain. Then a result of Bass~\cite[6.3]{BassUbi} shows that $\mu_R(M)\leq 2$. If $M$ is cyclic, then $\fm$ is also cyclic (since $\fm \cong \fm^{\ast\ast}=M^{\ast}$), and hence $R$ is regular. Thus, $M$ is not cyclic, that is, $\mu_R(M)=2$. Recall that $R \subseteq (R:_{\Q} \fm) \cong M$, see Remark \ref{2gen}(i). Since $(R:_{\Q} \fm)$ is a fractional ideal of $R$ containing a non zero-divisor on $R$, there is a short exact sequence of $R$-modules $0 \to \fm \to R^{\oplus 2} \to M \to 0$; see Remark~\ref{2gen-rmk}. Therefore, even though the $R$-module $\fm$ has minimal multiplicity, its co-syzygy module $M$ does not have minimal multiplicity.
\end{eg}

\subsection*{Modules of minimal multiplicity versus Burch modules} We have already noted that each Ulrich module has minimal multiplicity, and the converse of this fact is not necessarily true; see Proposition \ref{reduction}. There is another class of modules, a subclass of Ulrich modules, which we want to touch upon, namely, Burch modules. A torsion-free $R$-module $M$ is called \emph{Burch} if there is an $x\in \fm$ such that $x$ is a non zero-divisor on $M$ and $k$ is a direct summand of $M/xM$; see~\cite{SDTK} for examples and details.

In the following, we point out that the class of modules of minimal multiplicity and that of Burch modules (over one-dimensional Cohen-Macaulay rings) are distinct, in general. We also remark that Burch modules of minimal multiplicity need not be Ulrich.

\begin{chunk} \label{Burch-facts} Let $R$ be a ring.
\begin{enumerate}[\rm(i)]
\item The $R$-module $\fm$ is always a Burch module; see~\cite[3.4]{SDTK}.
\item If $M$ is a torsion-free $R$-module such that $M$ is Burch and $\id_R(M)<\infty$, then $R$ is regular; see~\cite[3.19]{SDTK}.
\item Assume $R$ is not Gorenstein, has minimal multiplicity, and admits a canonical module $\omega_R$. Then $\omega_R$ also has minimal multiplicity since every torsion-free $R$-module has minimal multiplicity; see Remark~\ref{SI}(ii). On the other hand, $\omega_R$ is not a Burch $R$-module; see part~(ii). For example, if $R = k[\![t^3, t^4, t^5]\!]$, the canonical module $\omega_R$ fails to be Burch, yet it has minimal multiplicity as an $R$-module.
\item Let $R=k[\![t^4, t^5]\!]$ and set $M=\fm$. Then $M$ is a Burch $R$-module, but it does not have minimal multiplicity since $\e(R)=4\neq 3=\mu_R(\fm^2)$; see~part (i) and~\ref{corchar}. 
\item If $R=k[\![t^4, t^5, t^6]\!]$ and $M=\fm$, then $M$ is Burch and has minimal multiplicity, but it is not Ulrich; see Example~\ref{Arfmax}.
\end{enumerate}
\end{chunk}

\section{Modules of minimal multiplicity versus Ulrich modules} \label{section-MMB}

Recall that, unless otherwise stated, $R$ denotes a \emph{one-dimensional commutative Noetherian Cohen-Macaulay local ring} with maximal ideal $\fm$ and residue field $k$. Moreover, all $R$-modules are assumed to be finitely generated.

A torsion-free $R$-module with minimal multiplicity with respect to an $\mathfrak{m}$-primary ideal $I$ of $R$ is not necessarily $I$-Ulrich; see Example~\ref{minmulnotulrich} and Proposition~\ref{reduction}. In this section, we prove Theorem~\ref{AGchar} and identify certain classes of modules and ideals for which the Ulrich property and minimal multiplicity are equivalent; see also Corollary~\ref{AGA}. Before proving Theorem~\ref{AGchar}, we establish several preliminary results. We begin by setting $\overline{(-)} = - \otimes_R R/xR$ for a given ring $R$ and an element $x \in R$.

\begin{lem} \label{ArfUlrich} Let $R$ be a ring, $I$ be an $\fm$-primary ideal of $R$, $(x)\subseteq I$ be a minimal reduction of $I$, and let $M$ be a torsion-free $R$-module. Then the following conditions are equivalent:
\begin{enumerate}[\rm(i)]
\item $M$ has minimal multiplicity with respect to $I$.
\item $IM$ is $I$-Ulrich.
\item $I \subseteq \Ann_R\left(\overline{IM}\right)$.
\end{enumerate}
\end{lem}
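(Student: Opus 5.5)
The plan is to reduce all three conditions to the single equality $I^2M=xIM$, using \ref{mm-defn} and Remark~\ref{02}. Here $\overline{IM}=IM/xIM$.

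\emph{(i)$\Leftrightarrow$(ii).} By \ref{mm-defn}, condition (i) is equivalent to $I^2M=xIM$ for the given minimal reduction $(x)$ of $I$. Next, $IM$ is a nonzero submodule of the torsion-free module $M$, so it is again torsion-free. We may assume $M\neq 0$; in this case $IM\neq 0$, since $I$ contains a non zero-divisor that is regular on $M$. Remark~\ref{02}, applied to the torsion-free module $IM$ with the same minimal reduction $(x)$, says that $IM$ is $I$-Ulrich if and only if $I(IM)=x(IM)$, that is, $I^2M=xIM$. So (i) and (ii) are both equivalent to the same equality.

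\emph{(ii)$\Leftrightarrow$(iii).} Condition (iii) says $I\cdot \overline{IM}=0$, i.e. $I\cdot(IM/xIM)=0$. This holds exactly when $I^2M\subseteq xIM$. The reverse inclusion $xIM\subseteq I^2M$ always holds because $x\in I$. Hence (iii) is equivalent to $I^2M=xIM$, which is (i) by the first step.

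The argument is essentially bookkeeping, and I do not expect a real obstacle. The only points needing care are confirming that $IM$ satisfies the hypotheses of Remark~\ref{02} (nonzero and torsion-free) and that $(x)$ remains a minimal reduction of $I$ in that application, which holds because the reduction is a property of $I$ alone.
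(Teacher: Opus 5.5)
Your proof is correct and follows the paper's argument: both reduce (i) and (ii) to the single equality $I^2M=xIM$ via \ref{mm-defn} and Remark~\ref{02}. Your explicit check that (iii) says $I^2M\subseteq xIM$, with the reverse inclusion automatic since $x\in I$, is the step the paper leaves implicit.
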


\begin{proof} We have by \ref{mm-defn} that $I^2M=xIM$ if and only if $M$ has minimal multiplicity with respect to $I$. Also, Remark \ref{02} shows that $I \cdot IM = x \cdot IM$ if and only if $IM$ is $I$-Ulrich. So, the equivalences hold.
\end{proof}

We leave the proof of the following observation to the reader.

\begin{lem} \label{obs} Let $R$ be a ring and let $M$ and $N$ be torsion-free $R$-modules. If $x\in \fm$ is a non zero-divisor on $R$, then there is an injective $R$-module homomorphism as follows:
\[
\overline{\Hom_R(M,N)} \xhookrightarrow{} \Hom_{\overline{R}}\left(\overline{M}, \overline{N}\right).
\]
\end{lem}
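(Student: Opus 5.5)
The plan is to build the map explicitly and then check injectivity using the torsion-freeness of $N$ and the fact that $x$ is a non zero-divisor. Since $\overline{(-)} = -\otimes_R R/xR$, we have $\overline{\Hom_R(M,N)} = \Hom_R(M,N)/x\Hom_R(M,N)$. Define
\[
\Phi\colon \Hom_R(M,N)/x\Hom_R(M,N) \longrightarrow \Hom_{\overline{R}}\left(\overline{M},\overline{N}\right),
\]
sending the class of $f$ to $\overline{f}=f\otimes_R R/xR$. Thus $\overline{f}(m+xM)=f(m)+xN$, which is well defined because $f(xM)\subseteq xN$. The map $f\mapsto \overline{f}$ is $R$-linear. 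It kills $x\Hom_R(M,N)$, because $(xg)(m)=x\,g(m)\in xN$. So $\Phi$ is a well-defined $R$-homomorphism, and also $\overline{R}$-linear.

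For injectivity, suppose $\overline{f}=0$, that is, $f(M)\subseteq xN$. The element $x$ is a non zero-divisor on $R$. Since $N$ is torsion-free, $x$ is also a non zero-divisor on $N$, so multiplication by $x$ gives an isomorphism $N\xrightarrow{\;\cdot x\;} xN$. Define $g\colon M\to N$ by letting $g(m)$ be the unique $n\in N$ with $xn=f(m)$. Uniqueness of $n$, together with the $R$-linearity of $f$, makes $g$ an $R$-homomorphism. By construction $f=xg$, so $f\in x\Hom_R(M,N)$ and the class of $f$ is zero. Hence $\Phi$ is injective.

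An equivalent route is to apply $\Hom_R(M,-)$ to the exact sequence $0\to N\xrightarrow{x}N\to N/xN\to 0$. This gives the left exact sequence
\[
0\to \Hom_R(M,N)\xrightarrow{x}\Hom_R(M,N)\to \Hom_R(M,N/xN),
\]
so $\Hom_R(M,N)/x\Hom_R(M,N)$ embeds into $\Hom_R(M,N/xN)\cong\Hom_{\overline{R}}(\overline{M},\overline{N})$.

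The only point needing care is that $x$ is a non zero-divisor on $N$. The hypothesis says $x$ is regular on $R$, while torsion-freeness of $N$ means that non zero-divisors of $R$ act injectively on $N$, so this holds. Torsion-freeness of $M$ is not needed.
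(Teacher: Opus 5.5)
Your proposal is correct. The paper gives no proof of this lemma and leaves it to the reader, so there is nothing to compare against. Your argument is the natural way to fill it in: apply left exactness of $\Hom_R(M,-)$ to $0\to N\xrightarrow{x}N\to N/xN\to 0$, then use $\Hom_R(M,N/xN)\cong\Hom_{\overline{R}}(\overline{M},\overline{N})$. You are also right that only the regularity of $x$ on $N$ is used, and this follows from $N$ being torsion-free.
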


The next proposition plays an important role in the proof of Theorem~\ref{AGchar}.

\begin{prop} \label{LAGAprop} Let $R$ be a ring, $I$ be an $\fm$-primary ideal of $R$, and let $M$ be a torsion-free $R$-module. Assume $M$ has minimal multiplicity with respect to $I$. Then $\Hom_R(IM, N)$ is $I$-Ulrich for each torsion-free $R$-module $N$.
\end{prop}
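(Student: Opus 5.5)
Fix a minimal reduction $(x)$ of $I$ and set $H=\Hom_R(IM,N)$. The plan is to verify the Ulrich criterion of Remark~\ref{02}, namely $IH=xH$. Equivalently, $I$ should annihilate $\overline{H}=H/xH$. Note that $H$ is torsion-free: it embeds in $N^{\oplus n}$ via a surjection $R^{\oplus n}\to IM$, so $x$ is a non zero-divisor on $H$ and $\overline{(-)}$ behaves well.

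The key input is Lemma~\ref{ArfUlrich}. Since $M$ has minimal multiplicity with respect to $I$, condition (iii) of that lemma gives $I\subseteq \Ann_R(\overline{IM})$. Now apply Lemma~\ref{obs} to the torsion-free modules $IM$ and $N$; the element $x$ is a non zero-divisor on $R$ because $(x)$ is a reduction of an $\fm$-primary ideal. This yields an injection
\[
\overline{H}=\overline{\Hom_R(IM,N)}\hookrightarrow \Hom_{\overline{R}}\left(\overline{IM},\overline{N}\right).
\]
Any element of $I$ kills $\overline{IM}$, so it kills every $\overline{R}$-homomorphism out of $\overline{IM}$: for $a\in I$ and $f$ such a map, $(af)(u)=f(au)=0$. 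Hence $I$ annihilates the right-hand side, and, the map being $R$-linear and injective, $I$ annihilates $\overline{H}$. Thus $IH\subseteq xH$, and since $x\in I$, we get $IH=xH$. By Remark~\ref{02}, $H$ is $I$-Ulrich.

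The only point needing care is the injection of Lemma~\ref{obs}, which the paper leaves to the reader. If it must be justified, apply $\Hom_R(IM,-)$ to $0\to N\xrightarrow{x}N\to \overline N\to 0$. This gives $H/xH\hookrightarrow \Hom_R(IM,\overline N)=\Hom_{\overline R}(\overline{IM},\overline N)$. Everything else is bookkeeping. If $H$ is zero, the Ulrich condition is vacuous or excluded by the definition's nonzero hypothesis, so that case needs at most a remark.
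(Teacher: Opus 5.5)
Your proposal is correct and is essentially the paper's proof: you reduce to $IH=xH$ via Remark~\ref{02}, obtain $I\subseteq \Ann_R(\overline{IM})$ from Lemma~\ref{ArfUlrich}, and transfer this annihilation to $\overline{H}$ through the injection of Lemma~\ref{obs}. Your justification of that injection, by applying $\Hom_R(IM,-)$ to $0\to N\xrightarrow{x}N\to\overline{N}\to 0$, correctly fills in the step the paper leaves to the reader.
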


\begin{proof} Let $N$ be a torsion-free $R$-module. Then $\Hom_R(IM, N)$ is a torsion-free $R$-module. Moreover, $IM$ is torsion-free since it is a submodule of the torsion-free $R$-module $M$. Pick a minimal reduction $(x)\subseteq I$; see \ref{setup}. Then, since $x$ is a non zero-divisor on $R$, there is an injection $\overline{\Hom_R(IM,N)} \hookrightarrow \Hom_{\overline{R}}\left(\overline{IM}, \overline{N}\right)$ due to Lemma \ref{obs}. Since $M$ has minimal multiplicity with respect to $I$, Lemma \ref{ArfUlrich} shows that $I \subseteq \Ann_R\left(\overline{IM}\right)$. Therefore, we have $$I \subseteq \Ann_R\left(\overline{IM}\right)\subseteq \Ann_R\left(\Hom_{\overline{R}}(\overline{IM}, \overline{N})\right).$$ This implies that $I \cdot \overline{\Hom_R(IM,N)}=0$, that is, $I \cdot \Hom_R(IM,N)= x \cdot \Hom_R(IM,N)$. Consequently, by Remark \ref{02}, $\Hom_R(IM, N)$ is $I$-Ulrich.
\end{proof}

An application of Proposition~\ref{LAGAprop} with $I=\fm$ and $N=R$ yields:

\begin{cor} Let $R$ be a ring and let $M$ be a nonzero torsion-free $R$-module. If $M$ has minimal multiplicity, then $(\fm M)^{\ast}$ is an Ulrich $R$-module.
\end{cor}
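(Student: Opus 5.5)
This is a direct specialization of Proposition~\ref{LAGAprop}, so the plan is simply to instantiate it with $I=\fm$ and $N=R$ and then check the hypotheses and the translation of terminology.

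The steps, in order, are as follows.
\begin{enumerate}[\rm(1)]
\item Note that $R$ is torsion-free as an $R$-module, since $R$ is Cohen--Macaulay of dimension one and its non zero-divisors act injectively on $R$. So $N=R$ is an admissible choice in Proposition~\ref{LAGAprop}.
\item Note that $\fm$ is $\fm$-primary, and that a minimal reduction of $\fm$ exists by the standing assumption~\ref{setup}.
\item By definition (see~\ref{MM}), \emph{$M$ has minimal multiplicity} means $M$ has minimal multiplicity with respect to $\fm$. Hence Proposition~\ref{LAGAprop} gives that $\Hom_R(\fm M,R)=(\fm M)^{\ast}$ is $\fm$-Ulrich, which by~\ref{UM} is what it means to be Ulrich.
\end{enumerate}

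There is one bookkeeping point. The definition of Ulrich in~\ref{UM} is stated for \emph{nonzero} torsion-free modules, so I will check that $(\fm M)^{\ast}\neq 0$. Since $M$ is nonzero and torsion-free, it has positive rank on some minimal prime, or at least $\fm M$ contains a nonzero element annihilated by no non zero-divisor. Then $\fm M$ is a nonzero torsion-free module over the one-dimensional Cohen--Macaulay ring $R$. Such a module embeds in a free module after localizing at $\rm Q$, so its $R$-dual is nonzero.

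To make the nonvanishing of the dual precise, I would use that $\fm M\otimes_R {\rm Q}\neq 0$ and $\Hom_{\rm Q}(\fm M\otimes {\rm Q},{\rm Q})\neq 0$, because ${\rm Q}$ is Artinian and self-injective only generically. This step is the only mild obstacle. If the Gorenstein-type input is unavailable in general, I would instead accept the convention implicit in Proposition~\ref{LAGAprop}, where the Ulrich equality $I\cdot \Hom = x\cdot \Hom$ from Remark~\ref{02} holds trivially for the zero module. Apart from this, the corollary follows verbatim.
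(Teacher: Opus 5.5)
Your proposal is correct and follows the paper's own argument: the corollary is obtained by specializing Proposition~\ref{LAGAprop} to $I=\fm$ and $N=R$. Your side discussion of $(\fm M)^{\ast}\neq 0$ is muddled, since $(\fm M)\otimes_R{\rm Q}$ need not embed in a free ${\rm Q}$-module and ${\rm Q}$ is self-injective only when $R$ is generically Gorenstein. The fact itself is easy: $\fm M\supseteq xM\neq 0$ for a non zero-divisor $x\in\fm$, so $(\fm M)\otimes_R{\rm Q}\neq 0$; since every maximal ideal of the Artinian ring ${\rm Q}$ is associated, this module surjects onto a residue field of ${\rm Q}$ that embeds in ${\rm Q}$, whence $(\fm M)^{\ast}\otimes_R{\rm Q}\cong\Hom_{\rm Q}((\fm M)\otimes_R{\rm Q},{\rm Q})\neq 0$.
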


We recall some properties which will be used, for example, in the proofs of Theorems~\ref{AGchar} and~\ref{corchar1}.

\begin{chunk} \label{fractional}  \label{trace} Let $R$ be a ring with total ring of fractions $\rm Q$, and let $I$ be an $\fm$-primary ideal of $R$. Then the following hold:
\begin{enumerate}[\rm(i)]
\item $I\left(I:_{\rmQ}I\right)= I$. 
\item $I$ is reflexive, that is, $I\cong I^{\ast\ast}$ as $R$-modules, if and only if  $\left(R:_{\rmQ}(R:_{\rmQ}I)\right)=I$; see, for example,~\cite[2.4(4)]{KoTa}.
\item Recall that $I$ is called stable if and only if $I^2=zI$ for some $z\in I$. If $I=x\left(I:_{\Q} I\right)$ for some $x\in I$, then $(x^{-1}I)^2=x^{-1}I$ so that $I^2=xI$, that is, $I$ is stable; see \cite[1.11]{Lipman}. 
\item $I$ is called a \emph{trace ideal} if it is equal to the trace of an $R$-module. It follows that $I$ is a trace ideal of $R$ if and only if $\left(I:_{\rmQ} I\right)=\left(R:_{\rmQ}I\right)$; see \cite[2.2]{SRK}. 
For example, if $R$ is not regular, then $\fm$ is a trace ideal of $R$; see~\cite[3.15]{AGR}. Moreover, each Ulrich ideal is a trace ideal; see~\cite[Page 4]{EndoGoto}.  
\end{enumerate}
\end{chunk}

\begin{chunk} \label{ideal} Let $R$ be a ring with total ring of fractions $\rm Q$.
\begin{enumerate}[\rm(i)]
\item If $I$, $J$, and $K$ are fractional ideals of $R$, then $\left(I:_{\rm Q}JK\right)=\left(\left(I:_{\rm Q}J\right):_{\rmQ}K\right)$.
\item If $I$ is an ideal of $R$ containing an element which is a non zero-divisor on $R$, and $J$ is an $R$-submodule of $\rm Q$, then $\left(J:_{\rmQ}I\right) \cong \Hom_R(I,J)$; see, for example,~\cite[2.4.2]{HunekeSwanson}.
\end{enumerate}
\end{chunk}

The observation we give next is key for the proofs of Theorems~\ref{AGchar} and~\ref{corchar1}.

\begin{chunk} \label{AGchar0} Let $R$ be a non-Gorenstein ring with a canonical ideal $\omega_R$ and let $I$ be an $\fm$-primary ideal of $R$. The following equality and isomorphisms hold:
\begin{align} \tag{\ref{AGchar0}.1}
\Hom_R(I \cdot \omega_R , \omega_R) & \cong (\omega_R:_{\rm Q} I \cdot \omega_R)  =  \left((\omega_R:_{\rm Q} \omega_R):_{\rm Q} I\right) \cong \left(R:_{\rm Q}I\right) \cong I^{\ast}.
\end{align}
Here, the isomorphisms follow from \ref{ideal}(ii) since both $I$ and $\omega_R$ are $\fm$-primary ideals of $R$, and therefore contain non zero-divisors on $R$. Moreover, the equality is due to \ref{ideal}(i).
\end{chunk}

Next is the main result of this section: 

\begin{thm} \label{AGchar} Let $R$ be a generically Gorenstein non-Gorenstein ring with a canonical ideal $\omega_R$ and let $I$ be an $\fm$-primary ideal of $R$. 
\begin{enumerate}[\rm(i)] 
\item If the $R$-module $I$ has minimal multiplicity with respect to $\omega_R$, then $I^{\ast}$ is $\omega_R$-Ulrich.
\item Assume $I$ is a trace ideal of $R$. If $I^{\ast}$ is $\omega_R$-Ulrich, then $I$ is $\omega_R$-Ulrich.
\end{enumerate}
\end{thm}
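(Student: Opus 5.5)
Part (i) follows from Proposition~\ref{LAGAprop} and the identification in~\ref{AGchar0}. Apply Proposition~\ref{LAGAprop} with the $\fm$-primary ideal $\omega_R$ in place of $I$, with the module $M=I$, and with $N=\omega_R$. Since $I$ has minimal multiplicity with respect to $\omega_R$, the proposition shows that $\Hom_R(\omega_R\cdot I,\omega_R)$ is $\omega_R$-Ulrich. By (\ref{AGchar0}.1), this module is isomorphic to $I^{\ast}$. The Ulrich property is invariant under isomorphism, so $I^{\ast}$ is $\omega_R$-Ulrich. Proposition~\ref{LAGAprop} needs a minimal reduction of $\omega_R$, which exists by~\ref{setup}. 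The generically Gorenstein hypothesis is what guarantees that $\omega_R$ can be taken to be an $\fm$-primary ideal (it is $\fm$-primary since $R$ is non-Gorenstein), so~\ref{AGchar0} applies.

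For part (ii), work with fractional ideals inside $\rm Q$. By~\ref{ideal}(ii), $I^{\ast}\cong (R:_{\rm Q} I)$, and since $I$ is a trace ideal,~\ref{trace}(iv) gives $(R:_{\rm Q}I)=(I:_{\rm Q}I)=:B$. Let $(x)$ be a minimal reduction of $\omega_R$. Being $\omega_R$-Ulrich is characterized by $\omega_R X=xX$ (Remark~\ref{02}). This characterization transfers along isomorphisms of fractional ideals, because an isomorphism is multiplication by a unit of $\rm Q$, and it commutes with multiplication by ideals of $R$. Hence the hypothesis gives $\omega_R B = xB$.

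Now multiply by $I$. Since $IB=I$ by~\ref{fractional}(i), we obtain
\[
\omega_R I=\omega_R (IB)= I(\omega_R B)= I(xB)= x(IB)= xI.
\]
By Remark~\ref{02}, $I$ is therefore $\omega_R$-Ulrich.

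The only delicate points are the following two. First, in (i), one must check that Proposition~\ref{LAGAprop} is applied to the product $\omega_R\cdot I$ in the order matching~\ref{AGchar0}; this is harmless since the product is commutative. Second, in (ii), one must justify passing from the abstract isomorphism $I^{\ast}\cong B$ to the equality $\omega_R B=xB$ of fractional ideals. I would handle the second point by noting that the condition $IM=xM$ is preserved under $R$-isomorphism of modules, and then applying it directly to $M=B$. Everything else is the one-line computation using $IB=I$.
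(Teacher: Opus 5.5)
Your proposal is correct and matches the paper's proof essentially step for step. In part (i) you apply Proposition~\ref{LAGAprop} with ideal $\omega_R$, module $I$, and $N=\omega_R$, then use (\ref{AGchar0}.1). In part (ii) you transfer $\omega_R(R:_{\rmQ}I)=x(R:_{\rmQ}I)$ from $I^{\ast}$, use the trace condition $(R:_{\rmQ}I)=(I:_{\rmQ}I)$, and multiply by $I$ using $I(I:_{\rmQ}I)=I$. Your remark that the condition $\omega_R M=xM$ is preserved under $R$-isomorphisms spells out the step the paper takes implicitly when it passes from $I^{\ast}$ to $(R:_{\rmQ}I)$.
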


\begin{proof} For part (i), assume $I$ has minimal multiplicity with respect to $\omega_R$. It follows from Proposition~\ref{LAGAprop} that $\Hom_R(I \cdot \omega_R, \omega_R)$ is $\omega_R$-Ulrich. Thus, (\ref{AGchar0}.1) shows that $I^{\ast}$ is $\omega_R$-Ulrich and establishes part (i). 

For part (ii), assume $I$ is a trace ideal of $R$ and $I^{\ast}$ is $\omega_R$-Ulrich. Then, since $\left(R:_{\rm Q}I\right) \cong I^{\ast}$, it follows that $(R:_{\rm Q}I) \cdot \omega_R=(R:_{\rm Q}I) \cdot x$ for some minimal reduction $(x) \subseteq \omega_R$; see~(\ref{AGchar0}.1) and Remark~\ref{02}. This yields the equality $I \cdot \omega_R=I \cdot x$ since
\[
I \cdot (R:_{\rm Q}I) \cdot \omega_R=I \cdot (R:_{\rm Q}I) \cdot x \Longrightarrow I \cdot \left(I:_{\rmQ} I\right)\cdot \omega_R=I \cdot \left(I:_{\rmQ} I\right)\cdot x \Longrightarrow I \cdot \omega_R=I \cdot x.
\]
Here, the first and second implications hold by~\ref{fractional}(iv) and~\ref{fractional}(i), respectively. Therefore, $I$ is $\omega_R$-Ulrich; see~Remark \ref{02}. 
\end{proof}

If $I$ is $\omega_R$-Ulrich, then, by Remark~\ref{02}, $I$ has minimal multiplicity with respect to $\omega_R$. When $I$ is a trace ideal, Theorem~\ref{AGchar} immediately gives the converse of this fact.

\begin{cor} \label{AGchar-cor} Let $R$ be a generically Gorenstein non-Gorenstein ring with a canonical ideal $\omega_R$ and let $I$ be an $\fm$-primary trace ideal of $R$. Then $I$ has minimal multiplicity with respect to $\omega_R$ if and only if $I$ is $\omega_R$-Ulrich.
\end{cor}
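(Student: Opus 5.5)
The corollary asserts an equivalence, and the two directions have different sources.

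The implication ``$\omega_R$-Ulrich $\Rightarrow$ minimal multiplicity with respect to $\omega_R$'' holds for every torsion-free module. By Remark~\ref{02}, if $I$ is $\omega_R$-Ulrich and $(x)\subseteq \omega_R$ is a minimal reduction, then $\omega_R I = xI$. Multiplying by $\omega_R$ gives $\omega_R^2 I = x\,\omega_R I$, so $I$ has minimal multiplicity with respect to $\omega_R$ by~\ref{mm-defn}. The existence of a minimal reduction is guaranteed by our standing assumption~\ref{setup}, since $\omega_R$ is $\fm$-primary.

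For the converse, assume $I$ has minimal multiplicity with respect to $\omega_R$. Here I chain the two parts of Theorem~\ref{AGchar}:
\begin{enumerate}[\rm(a)]
\item Theorem~\ref{AGchar}(i) gives that $I^{\ast}$ is $\omega_R$-Ulrich. This part requires no trace hypothesis. Its proof passes through Proposition~\ref{LAGAprop} applied with $M=I$ and $N=\omega_R$, together with the identification $\Hom_R(I\omega_R,\omega_R)\cong I^{\ast}$ from~(\ref{AGchar0}.1).
\item Since $I$ is a trace ideal, Theorem~\ref{AGchar}(ii) applies to the conclusion of (a) and yields that $I$ is $\omega_R$-Ulrich.
\end{enumerate}

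I expect no real obstacle; the work is in checking that the hypotheses of Theorem~\ref{AGchar} hold. Three things need confirming. First, the ring assumptions (generically Gorenstein, non-Gorenstein, with a canonical ideal) are exactly those of the corollary. Second, $I$ is $\fm$-primary, hence contains a non zero-divisor, so~\ref{ideal}(ii) is available. Third, $I$ is torsion-free as a submodule of $R$, so it is a legitimate module in the definitions of minimal multiplicity and of the Ulrich property.

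The only subtle point is the role of the trace condition $(I:_{\rm Q}I)=(R:_{\rm Q}I)$ from~\ref{fractional}(iv). It is precisely what lets one pass from the Ulrich equality $(R:_{\rm Q}I)\,\omega_R=(R:_{\rm Q}I)\,x$ for the dual back to the equality $I\omega_R=Ix$ for $I$. This step is already carried out inside the proof of Theorem~\ref{AGchar}(ii), so I only need to verify that the hypotheses match.
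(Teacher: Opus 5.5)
Your proposal is correct and follows the paper's argument essentially verbatim. The implication from $\omega_R$-Ulrich to minimal multiplicity is Remark~\ref{02}, and the converse is obtained by chaining Theorem~\ref{AGchar}(i), which needs no trace hypothesis, with Theorem~\ref{AGchar}(ii), where the trace condition is used.
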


We proceed and give further consequences of Theorem~\ref{AGchar}. As in the theorem, assume $R$ is a generically Gorenstein non-Gorenstein ring with total ring of fractions $\rm Q$ and a canonical ideal $\omega_R$. Recall that the conductor of $R$, namely $\con=\left(R:_QR[K]\right)$, has minimal multiplicity with respect to $\omega_R$; see Remark~\ref{conductor}. Thus, Theorem~\ref{AGchar}(i) shows that $\con^{\ast}$ is $\omega_R$-Ulrich. As $\con^{\ast} \cong (R:_Q\con)\cong R[K]$, we obtain by Remark~\ref{02} that:

\begin{cor} \label{cor-con} Let $R$ be a generically Gorenstein non-Gorenstein ring with a canonical ideal $\omega_R$. Then the $R$-module $R[K]$ has minimal multiplicity with respect to $\omega_R$.
\end{cor}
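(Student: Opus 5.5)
The plan is to apply Theorem~\ref{AGchar}(i) to the conductor $\con=\left(R:_{\rm Q}R[K]\right)$ and then identify $\con^{\ast}$ with $R[K]$.

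First, by Remark~\ref{conductor}, $\con\cdot\omega_R=\con\cdot x$ for a minimal reduction $(x)$ of $\omega_R$. Hence $\con$ is $\omega_R$-Ulrich, and so, by Remark~\ref{02}, it has minimal multiplicity with respect to $\omega_R$. We should check that $\con$ is an $\fm$-primary ideal, so that Theorem~\ref{AGchar} applies. Since $K=x^{-1}\omega_R$ is a finitely generated $R$-submodule of ${\rm Q}$, the ring $R[K]$ is a module-finite birational extension of $R$. Concretely, $K^n=K^{n+1}$ for $n\gg0$ (the Hilbert function stabilizes because $(x)$ is a reduction), so $R[K]=K^n$. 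Consequently $x^n\in\con$. As $x$ is a non zero-divisor, $\con$ contains a non zero-divisor and is therefore $\fm$-primary in our one-dimensional setting (if $\con=R$, then $\con$ is still trivially Ulrich, and that degenerate case is fine). Theorem~\ref{AGchar}(i) now gives that $\con^{\ast}$ is $\omega_R$-Ulrich.

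Second, we identify $\con^{\ast}$ with $R[K]$. By~\ref{ideal}(ii), $\con^{\ast}\cong(R:_{\rm Q}\con)$. Since $\con$ is an ideal of the ring $R[K]$, we have $R[K]\subseteq(R:_{\rm Q}\con)$. For the reverse inclusion I would use that $R[K]=K^n$ is a fractional ideal containing $K$, hence reflexive with respect to the canonical module, together with the generically Gorenstein hypothesis. Via the duality $\left(K:_{\rm Q}(K:_{\rm Q}X)\right)=X$, one gets $\con=(R:_{\rm Q}R[K])=(K:_{\rm Q}K\cdot R[K])=(K:_{\rm Q}R[K])$, since $K\cdot R[K]=R[K]$. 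Applying $(K:_{\rm Q}-)$ then recovers $R[K]$. A further computation is needed to pass from $(K:_{\rm Q}\con)=R[K]$ to $(R:_{\rm Q}\con)=R[K]$. I expect this to follow from $R\subseteq K$ together with the fact that $(R:_{\rm Q}\con)$ is a ring containing $R[K]$ and contained in $(K:_{\rm Q}\con)$. This reverse inclusion is the main obstacle.

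Finally, $\con^{\ast}\cong R[K]$ is $\omega_R$-Ulrich, so Remark~\ref{02} yields that $R[K]$ has minimal multiplicity with respect to $\omega_R$. In fact Remark~\ref{02} gives the stronger statement that $R[K]$ is $\omega_R$-Ulrich, which can also be seen directly from $K\cdot R[K]=R[K]$, i.e.\ $\omega_R R[K]=xR[K]$.
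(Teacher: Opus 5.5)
Your proposal is correct and follows the paper's route exactly. The paper argues as follows: $\con$ has minimal multiplicity with respect to $\omega_R$ by Remark~\ref{conductor}, Theorem~\ref{AGchar}(i) makes $\con^{\ast}$ $\omega_R$-Ulrich, and $\con^{\ast}\cong(R:_{\rm Q}\con)\cong R[K]$ finishes the proof via Remark~\ref{02}; the paper states this last isomorphism without proof. The step you call the main obstacle is already closed by your own ingredients, since $R[K]\subseteq(R:_{\rm Q}\con)\subseteq(K:_{\rm Q}\con)=R[K]$, and your closing observation that $K\cdot R[K]=R[K]$, i.e.\ $\omega_R R[K]=xR[K]$, is by itself a complete proof that bypasses Theorem~\ref{AGchar} altogether.
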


In the following, we point out that an $\fm$-primary ideal $I$ -- which is reflexive as a module -- over a generically Gorenstein non-Gorenstein ring has minimal multiplicity with respect to $\omega_R$ if and only if its algebraic dual $I^{\ast}$ does.

\begin{cor}\label{AGA} \label{AGA-cor}
Let $R$ be a generically Gorenstein non-Gorenstein ring with canonical ideal $\omega_R$, and let $I$ be an $R$-ideal. Consider the following conditions:
\begin{enumerate}[(i)]
\item $I$ is $\omega_R$-Ulrich.
\item $I$ has minimal multiplicity with respect to $\omega_R$.
\item $I^{\ast}$ is $\omega_R$-Ulrich.
\item $I^{\ast}$ has minimal multiplicity with respect to $\omega_R$.
\end{enumerate}
Then $(i)\Rightarrow (ii)\Rightarrow (iii)\Rightarrow (iv)$. Furthermore, if $I$ is a reflexive $R$-module, then
\[
(i)\Longleftrightarrow (ii)\Longleftrightarrow (iii)\Longleftrightarrow (iv).
\]
\end{cor}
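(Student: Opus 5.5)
The chain $(i)\Rightarrow(ii)\Rightarrow(iii)\Rightarrow(iv)$ follows from results already in place; reflexivity is then used to close the loop. Throughout, $I$ is assumed $\fm$-primary, so that (\ref{AGchar0}.1) and Theorem~\ref{AGchar} apply.

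The implication $(i)\Rightarrow(ii)$ is Remark~\ref{02} applied with the ideal $\omega_R$ and the module $I$: an $\omega_R$-Ulrich torsion-free module has minimal multiplicity with respect to $\omega_R$. The implication $(ii)\Rightarrow(iii)$ is exactly Theorem~\ref{AGchar}(i). For $(iii)\Rightarrow(iv)$ I would invoke Remark~\ref{02} again, now with the torsion-free module $I^{\ast}$, which is torsion-free as a dual.

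Now assume $I$ is reflexive, so $I\cong I^{\ast\ast}$. It suffices to prove $(iv)\Rightarrow(i)$. Suppose $I^{\ast}$ has minimal multiplicity with respect to $\omega_R$. I would apply the chain just established to the fractional ideal $J=(R:_{\rm Q}I)\cong I^{\ast}$ in place of $I$. Choose a non zero-divisor $s\in R$ with $sJ\subseteq R$. Then $sJ$ is an ideal isomorphic to $I^{\ast}$, and it is $\fm$-primary: it contains a non zero-divisor, because $J\supseteq R$ gives $s\in sJ$, and in dimension one an ideal containing a non zero-divisor is $\fm$-primary or all of $R$. The whole statement is invariant under isomorphism of modules, since both minimal multiplicity and the Ulrich property with respect to $\omega_R$ depend only on the module. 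So $(ii)\Rightarrow(iii)$ for the ideal $sJ$ shows that $(sJ)^{\ast}\cong I^{\ast\ast}\cong I$ is $\omega_R$-Ulrich, which is (i).

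The only delicate step is this last one: the passage from $I^{\ast}$ to an honest $\fm$-primary ideal, so that Theorem~\ref{AGchar}(i) applies, and the check that the double dual computed through that ideal is still isomorphic to $I$. I would handle it exactly as in Remark~\ref{2gen-rmk}, using that duals of isomorphic modules are isomorphic. The case where $sJ$ happens to be all of $R$ is harmless, because then $I$ is principal, hence free and $\omega_R$-Ulrich only in degenerate cases; more simply, Theorem~\ref{AGchar}(i) still goes through since $R\cdot\omega_R$ behaves as in the $\fm$-primary case via (\ref{AGchar0}.1).
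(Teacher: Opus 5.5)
Your proposal is correct and follows the paper's proof. You use Remark~\ref{02} for (i)$\Rightarrow$(ii) and (iii)$\Rightarrow$(iv), and Theorem~\ref{AGchar}(i) for (ii)$\Rightarrow$(iii); for reflexive $I$ you apply Theorem~\ref{AGchar}(i) to $J\cong I^{\ast}$ to conclude that $J^{\ast}\cong I^{\ast\ast}\cong I$ is $\omega_R$-Ulrich.

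The only difference is your extra rescaling of the fractional ideal $(R:_{\rm Q}I)$ to an $\fm$-primary ideal $sJ$, which the paper skips by applying the theorem to $J$ directly. The degenerate case $sJ=R$ can be avoided altogether by replacing $s$ with $st$ for a non zero-divisor $t\in\fm$.
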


\begin{proof} The implications (i)$\Longrightarrow$ (ii) and (iii)$\Longrightarrow$ (iv) hold due to Remark~\ref{02}, and Theorem~\ref{AGchar}(i) yields (ii)$\Longrightarrow$ (iii).

Now assume $I$ is a reflexive $R$-module. It is enough to establish the implication (iv)$\Longrightarrow$ (i). For that, assume $I^{\ast}$ has minimal multiplicity with respect to $\omega_R$. Set $J=I^{\ast}=\Hom_R(I,R)$. Recall that $J \cong \left(R:_{\rm Q}I\right)$  is a fractional ideal of $R$; see~\ref{ideal}(ii). So, Theorem \ref{AGchar}(i) implies that $J^{\ast}$ is $\omega_R$-Ulrich. Since $I$ is reflexive, we have that $J^{\ast} = I^{\ast\ast}\cong I$. This completes the proof.
\end{proof}

The following fact is used in the proofs of Corollaries~\ref{AGA2} and~\ref{corchar2}.

\begin{chunk} \label{maxref} If $R$ is a (one-dimensional Cohen-Macaulay local) ring, then the maximal ideal $\fm$ of $R$ is a reflexive $R$-module; see, for example~\cite[4.1]{Faber}.
\end{chunk}

Corollary~\ref{AGA-cor}, in view of \ref{maxref}, immediately yields:

\begin{cor} \label{AGA2} Let $R$ be a generically Gorenstein non-Gorenstein ring with a canonical ideal $\omega_R$. Then the following conditions are equivalent: 
\begin{enumerate}[\rm(i)] 
\item $\fm$ is $\omega_R$-Ulrich.
\item $\fm$ has minimal multiplicity with respect to $\omega_R$.
\item $\fm^{\ast}$ is $\omega_R$-Ulrich.
\item $\fm^{\ast}$ has minimal multiplicity with respect to $\omega_R$.
\end{enumerate}
\end{cor}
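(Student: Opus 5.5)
This follows directly from Corollary~\ref{AGA-cor} applied with $I=\fm$. The plan is to check the hypotheses of that corollary and then read off the four equivalences.

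First, $\fm$ is an ideal of $R$. It is $\fm$-primary, and $R$ has dimension one and is Cohen-Macaulay, so $\fm$ contains a non zero-divisor on $R$. Hence the general chain of implications of Corollary~\ref{AGA-cor},
\[
(\mathrm{i})\Rightarrow(\mathrm{ii})\Rightarrow(\mathrm{iii})\Rightarrow(\mathrm{iv}),
\]
holds with $I=\fm$. Concretely, the implications (i)$\Rightarrow$(ii) and (iii)$\Rightarrow$(iv) come from Remark~\ref{02}. The implication (ii)$\Rightarrow$(iii) comes from Theorem~\ref{AGchar}(i), which uses Proposition~\ref{LAGAprop} together with the isomorphism $\Hom_R(\fm\cdot\omega_R,\omega_R)\cong\fm^{\ast}$ from (\ref{AGchar0}.1).

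It remains to close the cycle, that is, to prove (iv)$\Rightarrow$(i). This is where reflexivity is needed. By~\ref{maxref}, $\fm$ is a reflexive $R$-module, so the second half of Corollary~\ref{AGA-cor} applies and all four conditions are equivalent.

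Unpacking that step: put $J=\fm^{\ast}\cong(R:_{\rm Q}\fm)$, which is a fractional ideal. Assuming (iv), we apply Theorem~\ref{AGchar}(i) to $J$ and conclude that $J^{\ast}$ is $\omega_R$-Ulrich. Since $J^{\ast}=\fm^{\ast\ast}\cong\fm$, this gives (i).

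The only delicate point is that Theorem~\ref{AGchar}(i) is stated for $\fm$-primary ideals, whereas $J$ is only a fractional ideal. I would handle this by replacing $J$ with $sJ\subseteq R$ for a non zero-divisor $s$. Then $sJ\cong J$, and $sJ$ is $\fm$-primary as long as $sJ\neq R$; if $sJ=R$, then $J$ is free and the conclusion is trivial. Alternatively, I would note that the computation in (\ref{AGchar0}.1) works verbatim for fractional ideals. This is the same step already used in the proof of Corollary~\ref{AGA-cor}, so I expect no new obstacle.
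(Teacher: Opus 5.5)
Your proposal is correct and is exactly the paper's argument: the paper obtains this corollary by applying Corollary~\ref{AGA-cor} to $I=\fm$, using that $\fm$ is reflexive by~\ref{maxref}. Your unpacking of (iv)$\Rightarrow$(i), via Theorem~\ref{AGchar}(i) applied to $J=\fm^{\ast}$, matches the paper's proof of Corollary~\ref{AGA-cor}. One small correction to your aside: when $sJ=R$ the conclusion is not trivial, since it would force $\omega_R$ to be principal. That case simply cannot occur, because $\fm^{\ast}$ free would make $\fm\cong\fm^{\ast\ast}$ free, so $R$ would be regular, contradicting that $R$ is non-Gorenstein.
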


Corollary~\ref{AGA2} provides a characterization of almost Gorenstein rings. Although we do not define almost Gorenstein rings or discuss their properties in detail, we refer the reader to~\cite{AGR} for the definition and further information regarding this class of rings. In particular, a generically Gorenstein, non-regular ring with a canonical ideal $\omega_R$ is almost Gorenstein if and only if its maximal ideal $\fm$ is $\omega_R$-Ulrich; see~\cite[3.11]{AGR}. This fact and Corollary~\ref{AGA2} imply:

\begin{cor} \label{AlmGor-cor} Let $R$ be a generically Gorenstein non-regular ring with a canonical ideal $\omega_R$. Then $R$ is almost Gorenstein if and only if $\fm$ has minimal multiplicity with respect to $\omega_R$ if and only if $\fm^{\ast}$ has minimal multiplicity with respect to $\omega_R$.
\end{cor}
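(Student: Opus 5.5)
The statement to prove is Corollary~\ref{AlmGor-cor}: $R$ is almost Gorenstein if and only if $\fm$ has minimal multiplicity with respect to $\omega_R$, if and only if $\fm^{\ast}$ does. The plan is to combine the characterization~\cite[3.11]{AGR} with Corollary~\ref{AGA2}. The only complication is that Corollary~\ref{AGA2} assumes $R$ is non-Gorenstein, while here $R$ is only assumed non-regular. So I would split into two cases according to whether $R$ is Gorenstein.

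\emph{Case 1: $R$ is not Gorenstein.} By~\cite[3.11]{AGR}, $R$ is almost Gorenstein if and only if $\fm$ is $\omega_R$-Ulrich. All hypotheses of Corollary~\ref{AGA2} now hold; note that \ref{maxref} supplies reflexivity of $\fm$. Hence $\fm$ being $\omega_R$-Ulrich is equivalent to $\fm$ having minimal multiplicity with respect to $\omega_R$, and also to $\fm^{\ast}$ having minimal multiplicity with respect to $\omega_R$. Chaining these equivalences gives the claim in this case.

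\emph{Case 2: $R$ is Gorenstein.} Then $R$ is almost Gorenstein, so I must show that both $\fm$ and $\fm^{\ast}$ have minimal multiplicity with respect to $\omega_R$.

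First I take $\omega_R=(x)$, a principal ideal generated by a non zero-divisor. Any canonical ideal is then of this form: it is an $\fm$-primary ideal isomorphic to $R$, and an isomorphism sends $1$ to a non zero-divisor generator. The minimal reduction of $\omega_R$ is $(x)$ itself. For any torsion-free module $M$ we get
\[
\omega_R^2M=x^2M=x\,\omega_RM,
\]
so $M$ has minimal multiplicity with respect to $\omega_R$ by \ref{mm-defn}. Equivalently, the principal ideal $\omega_R$ is stable, and Remark~\ref{SI}(i) applies.

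Applying this with $M=\fm$ and $M=\fm^{\ast}$, both of which are torsion-free, shows that all three conditions hold. This completes Case 2.

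I do not expect a real obstacle. The one point needing care is that the hypothesis is ``non-regular'' rather than ``non-Gorenstein'', and the Gorenstein case is handled by the observation that a principal canonical ideal makes every torsion-free module trivially of minimal multiplicity with respect to it.
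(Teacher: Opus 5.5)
Your proof is correct and follows the paper's route: the corollary is obtained by combining \cite[3.11]{AGR} with Corollary~\ref{AGA2}. Your separate Gorenstein case is a worthwhile addition. There $\omega_R=(x)$ is stable, so every torsion-free module, in particular $\fm$ and $\fm^{\ast}$, has minimal multiplicity with respect to it. This matters because Corollary~\ref{AGA2} is stated only for non-Gorenstein rings, whereas this corollary assumes only that $R$ is non-regular, and the paper's one-line derivation passes over that point.
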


\section{Proof of Theorem \ref{thm-intro}} \label{main-section}

The aim of this section is to study cases in which the canonical module has minimal multiplicity with respect to a trace or reflexive ideal. Our main result, Theorem~\ref{corchar1}, requires some preliminary work and leads to several consequences, including part of the results announced in the introduction; see Corollaries~\ref{corcharI} and~\ref{corchar2}.

Recall that, if $R$ is a ring, an $\fm$-primary ideal $I$ is called \emph{reflexive} if $\left(R:_{\rm Q}(R:_{\rm Q}I)\right)=I$, and called \emph{trace} if $\left(I:_{\rm Q}I\right)=\left(R:_{\rm Q}I\right)$, where $\rm Q$ is the total ring of fractions of $R$; see~\ref{fractional}.

\begin{thm} \label{corchar1} Let $R$ be a generically Gorenstein non-Gorenstein ring with canonical ideal $\omega_R$, and let $I$ be an $\mathfrak{m}$-primary ideal of $R$. 
Suppose $I$ is either a trace ideal or reflexive as an $R$-module. 
If $\omega_R$ has minimal multiplicity with respect to $I$, then $I$ is a stable ideal.
\end{thm}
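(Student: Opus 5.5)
The plan is to pass from the hypothesis on $\omega_R$ to an Ulrich property of $I^{\ast}\cong(R:_{\rm Q}I)$, turn that into an equality of fractional ideals, and then use the trace or reflexive hypothesis to reach $I=x(I:_{\rm Q}I)$, which is stability by \ref{fractional}(iii).

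Fix a minimal reduction $(x)\subseteq I$ (see \ref{setup}). Since $\omega_R$ has minimal multiplicity with respect to $I$, Proposition~\ref{LAGAprop}, applied with $M=N=\omega_R$, shows that $\Hom_R(I\cdot\omega_R,\omega_R)$ is $I$-Ulrich. By (\ref{AGchar0}.1) this module is isomorphic to the fractional ideal $J:=(R:_{\rm Q}I)$, so $J$ is $I$-Ulrich. Remark~\ref{02} then gives $I\cdot J'=x\cdot J'$ for the fractional ideal $J'$ actually realizing $\Hom_R(I\omega_R,\omega_R)$.

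To transfer this to $J$ itself, I will use that an $R$-isomorphism between fractional ideals containing non zero-divisors is multiplication by a unit of ${\rm Q}$. An equality of the form $IJ'=xJ'$ is invariant under such scaling, so
\[
I\,(R:_{\rm Q}I)=x\,(R:_{\rm Q}I).
\]
Alternatively, one can apply Remark~\ref{02} directly to $J$, which is a torsion-free module isomorphic to an $I$-Ulrich one; the Ulrich condition $\e_R(I,J)=\len_R(J/IJ)$ is isomorphism invariant. I expect this bookkeeping step to be the only delicate point.

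\emph{Trace case.} Here $(R:_{\rm Q}I)=(I:_{\rm Q}I)$ by \ref{fractional}(iv). The displayed equality becomes $I(I:_{\rm Q}I)=x(I:_{\rm Q}I)$, and the left side equals $I$ by \ref{fractional}(i). Hence $I=x(I:_{\rm Q}I)$, and \ref{fractional}(iii) shows that $I$ is stable.

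\emph{Reflexive case.} Apply $(R:_{\rm Q}-)$ to both sides of $IJ=xJ$. On the right, $(R:_{\rm Q}xJ)=x^{-1}(R:_{\rm Q}J)=x^{-1}I$, using that $x$ is a unit of ${\rm Q}$ and that $I$ is reflexive, \ref{fractional}(ii). On the left, \ref{ideal}(i) gives
\[
(R:_{\rm Q}JI)=((R:_{\rm Q}J):_{\rm Q}I)=(I:_{\rm Q}I).
\]
Therefore $(I:_{\rm Q}I)=x^{-1}I$, that is, $I=x(I:_{\rm Q}I)$, and again \ref{fractional}(iii) yields $I^2=xI$.
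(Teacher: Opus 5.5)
Your proposal is correct and follows the paper's proof essentially step for step. Proposition~\ref{LAGAprop} with $M=N=\omega_R$, together with (\ref{AGchar0}.1), gives $I(R:_{\rm Q}I)=x(R:_{\rm Q}I)$; the trace case then uses \ref{fractional}(i),(iv), and the reflexive case dualizes via \ref{ideal}(i) to obtain $I=x(I:_{\rm Q}I)$. Your remark that the Ulrich property is invariant under isomorphism makes explicit a point the paper passes over silently.
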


\begin{proof} Assume $\omega_R$ has minimal multiplicity with respect to $I$, and pick a minimal reduction $(x)$ of $I$. Using Proposition~\ref{LAGAprop} with $M=N=\omega_R$,  we see that $\Hom_R(I\omega_R, \omega_R)$ is $I$-Ulrich. Thus, $(\ref{AGchar0}.1)$ implies that $\left(R:_{\rm Q}I\right)$ is $I$-Ulrich, and so $x(R:_{\rmQ}I)=I(R:_{\rmQ} I)$; see Remark~\ref{02}. 

Suppose $I$ is a trace ideal. Then $(I:_{\rmQ}I) = (R:_{\rmQ} I)$, and hence the following equalities hold:
\[
x(I:_{\rmQ}I) = x(R:_{\rmQ} I)=I(R:_{\rmQ}I) = I(I:_{\rmQ}I)=I.
\]
Here, the fourth equality is due to~\ref{fractional}(i). Therefore, $I$ is stable; see~\ref{fractional}(iii).

Next suppose $I$ is reflexive, that is, $\left(R:_{\rmQ}(R:_{\rmQ}I)\right)=I$. The following equalities hold:
\[
x^{-1}I=x^{-1} \big(R:_{\rmQ}(R:_{\rmQ}I)\big)=\big(R:_{\rmQ}x(R:_{\rmQ}I)\big) =\big(R:_{\rmQ}I(R:_{\rmQ}I)\big)=\big( \left(R:_{\rmQ}(R:_{\rmQ}I)\right):_{\rmQ} I \big) = (I :_{\rmQ} I).
\]
Here, the first and fifth equalities hold since $I$ is reflexive; the second and fourth equalities follow from the definition of colon ideals; and the third equality holds because $\left(R:_{\rm Q}I\right)$ is $I$-Ulrich, as noted earlier.
So, we have that $x^{-1} I =\left(I:_{\rmQ} I\right)$, that is, $I=x\left(I:_{\rmQ} I\right)$. This implies that $I$ is stable; see~\ref{fractional}(iii).
\end{proof}

\begin{cor} \label{corcharI} Let $R$ be a generically Gorenstein non-Gorenstein ring with canonical ideal $\omega_R$, and let $I$ be an $\fm$-primary ideal of $R$. 
Suppose $I$ is either a trace ideal or reflexive as an $R$-module. Then the following conditions are equivalent:
\begin{enumerate}[\rm(i)]
\item $I$ is stable.
\item Every torsion-free $R$-module has minimal multiplicity with respect to $I$.
\item $\omega_R$ has minimal multiplicity with respect to $I$.
\end{enumerate}
\end{cor}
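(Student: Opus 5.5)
The plan is to prove the cycle of implications (i)$\Rightarrow$(ii)$\Rightarrow$(iii)$\Rightarrow$(i). Almost all of the work has already been done in the results above, so the corollary should follow by assembling them.

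For (i)$\Rightarrow$(ii), I will use Remark~\ref{SI}(i). If $I$ is stable, say $I^2=yI$ with $y\in I$, then $(y)$ is a minimal reduction of $I$. For any torsion-free $R$-module $M$ we then have $I^2M=yIM$, and \ref{mm-defn} (equivalently \ref{B}(iii)) shows that $M$ has minimal multiplicity with respect to $I$. The hypothesis that $I$ is trace or reflexive is not needed in this direction.

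For (ii)$\Rightarrow$(iii), I will observe that $\omega_R$ is a torsion-free $R$-module. Indeed, it is a canonical ideal, hence an $\fm$-primary ideal of $R$ and therefore a submodule of $R$. It is also nonzero. So (iii) is simply a special case of (ii).

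For (iii)$\Rightarrow$(i), I will apply Theorem~\ref{corchar1} directly. Its hypotheses are exactly those of the corollary: $R$ is generically Gorenstein and non-Gorenstein with canonical ideal $\omega_R$, and $I$ is an $\fm$-primary ideal that is either trace or reflexive. Under these hypotheses, minimal multiplicity of $\omega_R$ with respect to $I$ forces $I$ to be stable.

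I do not expect any genuine obstacle, since the substantive step, (iii)$\Rightarrow$(i), is already contained in Theorem~\ref{corchar1}. The only points to check are bookkeeping. First, the standing assumption in \ref{setup} that minimal reductions exist ensures that \ref{mm-defn} applies. Second, the paper's notion of "reflexive as an $R$-module" matches the colon-ideal condition used in Theorem~\ref{corchar1}, which is the content of \ref{fractional}(ii).
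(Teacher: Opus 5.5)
Your proposal is correct and follows the paper's proof exactly: (i)$\Rightarrow$(ii) by Remark~\ref{SI}(i), (ii)$\Rightarrow$(iii) because $\omega_R$ is a nonzero torsion-free module, and (iii)$\Rightarrow$(i) by Theorem~\ref{corchar1}. Your bookkeeping remarks on the existence of minimal reductions and on reflexivity via \ref{fractional}(ii) are also accurate.
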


\begin{proof} The implication (i) $\Longrightarrow$ (ii) follows from Remark~\ref{SI}(i), and the one (ii) $\Longrightarrow$ (iii) is clear. Moreover, Theorem~\ref{corchar1} yields (iii) $\Longrightarrow$ (i).
\end{proof}

The following special case of Corollary~\ref{corcharI} yields a new characterization of rings of minimal multiplicity.

\begin{cor} \label{corchar2} Let $R$ be a generically Gorenstein non-Gorenstein ring with canonical ideal $\omega_R$. Then the following conditions are equivalent:
\begin{enumerate}[\rm(i)]
\item $R$ has minimal multiplicity. 
\item Every torsion-free $R$-module has minimal multiplicity.
\item $\omega_R$ has minimal multiplicity.
\end{enumerate}
\end{cor}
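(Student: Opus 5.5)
The plan is to specialize Corollary~\ref{corcharI} to $I=\fm$. The one thing to check is that $\fm$ meets its hypotheses. First, $\fm$ is $\fm$-primary. Second, $\fm$ is reflexive as an $R$-module by~\ref{maxref}, so the reflexive branch of Corollary~\ref{corcharI} applies without further conditions. (If $R$ is not regular, $\fm$ is also a trace ideal by~\ref{fractional}(iv). Since $R$ is non-Gorenstein it is in particular non-regular, so this alternative route is available too.)

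Corollary~\ref{corcharI} with $I=\fm$ then says the following are equivalent: $\fm$ is stable; every torsion-free $R$-module has minimal multiplicity with respect to $\fm$; and $\omega_R$ has minimal multiplicity with respect to $\fm$. By the convention in~\ref{MM}, minimal multiplicity with respect to $\fm$ is exactly minimal multiplicity. So the last two conditions are precisely (ii) and (iii) of the present statement.

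It remains to identify the first condition with (i). By Remark~\ref{SI}(ii), $\fm$ is stable if and only if $R$ has minimal multiplicity, i.e.\ $\e(R)=\mu_R(\fm)$. Putting these together gives (i)$\Leftrightarrow$(ii)$\Leftrightarrow$(iii).

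There is no real obstacle here. The only point needing care is confirming that $\fm$ satisfies the trace-or-reflexive hypothesis, which is supplied by~\ref{maxref} (or by~\ref{fractional}(iv) using non-regularity). All the substantive work was already done in Theorem~\ref{corchar1}.
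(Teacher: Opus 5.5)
Your proposal is correct and follows the paper's proof: specialize Corollary~\ref{corcharI} to $I=\fm$, using that $\fm$ is reflexive by~\ref{maxref}, and identify stability of $\fm$ with $R$ having minimal multiplicity via Remark~\ref{SI}(ii). The paper spells out only the implication (iii)$\Rightarrow$(i), since the other implications already follow from Remark~\ref{SI}(ii), but the argument is the same.
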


\begin{proof} It suffices to observe that the implication (iii) $\Longrightarrow$ (i) holds; see Remark~\ref{SI}(ii). Recall that $\fm$ is a reflexive $R$-module; see~\ref{maxref}. Thus, if $\omega_R$ has minimal multiplicity, then by Corollary~\ref{corcharI} applied to the case $I=\fm$, we see that $\fm$ is stable. However, the stability of $\fm$ is equivalent to $R$ having minimal multiplicity; see Remark~\ref{SI}(ii).
\end{proof}

Section~\ref{section-MMB} investigates the case of $\fm$-primary ideals $I$ having minimal multiplicity with respect to $\omega_R$. In light of this, it is worth noting, however, that this condition, if it holds, does not imply that $\omega_R$ has minimal multiplicity with respect to $I$.

\begin{rmk} Let $R=k[\![t^4, t^5, t^7]\!]$. Then $R$ is an almost Gorenstein ring; see~\cite[4.7]{Arf1}. Thus, by Corollary~\ref{AlmGor-cor}, $\fm$ has minimal multiplicity with respect to $\omega_R$. On the other hand, $\omega_R$ does not have minimal multiplicity  (with respect to $\fm$) since $R$ does not have minimal multiplicity; see Corollary~\ref{corchar2}.
\end{rmk}

\appendix
\section{An exact sequence involving modules of minimal multiplicity} \label{apa}
The main purpose of this appendix is to prove the existence of a certain short exact sequence induced by modules of minimal multiplicity; see~\ref{Aseq}(1). We illustrate two applications of this exact sequence. First, it allows us to provide a proof of~\ref{corchar} from Section~\ref{D-M}, restated here as~\ref{Aseq}(2) for the reader’s convenience. Second, our argument can be used to establish results on the vanishing of $\Tor_i^R(M,N)$ when $M$ is a module of minimal multiplicity; such a demonstration is provided in~\ref{cx}.

Recall that we assume $R$ is a one-dimensional Cohen-Macaulay local ring, and all $R$-modules are assumed to be finitely generated. We also assume that minimal reductions of $\fm$-primary ideals of $R$ exist; see~\ref{setup}.

\begin{chunk} \label{Aseq} Let $M$ be a nonzero torsion-free $R$-module. Set $e=\e_R(M)$ and $v=\mu_R(M)$.
\begin{enumerate}[\rm(1)]
\item Assume $M$ has minimal multiplicity (with respect to $\fm$). Then $e=\mu_R(\fm M)$. If, in addition, $(x)$ is a minimal reduction of $\fm$, then there exists an exact sequence of $R$-modules
\[
0 \to k^{\oplus (e-v)} \to M/xM \to k^{\oplus v} \to 0.
\]
\item The following conditions are equivalent:
\begin{enumerate}[\rm(i)]
\item $\fm^i M$ has minimal multiplicity for all $i\geq 0$.
\item $M$ has minimal multiplicity.
\item $e=\mu_R(\fm M)$. 
\item $\fm^2 M \cong \fm M$.
\end{enumerate}
\end{enumerate}
\end{chunk}

\begin{proof} (1) Pick a minimal reduction $(x)$ of $\fm$, and consider the following commutative diagram: 
\[
\xymatrix{
0 \ar[r] & \fm M \ar[r] \ar[d]^x &M \ar[r] \ar[d]^x & M/\fm M \ar[r] \ar[d]^{x} & 0\\
0 \ar[r] & \fm M \ar[r]  & M \ar[r] & M/\fm M \ar[r] & 0 \\
}
\]
As the multiplication by $x$ on $M/\fm M$ is the zero map, the snake lemma applied to the diagram yields the following exact sequence:
\begin{equation} \label{Aseq1}\tag{\ref*{Aseq}.1}
0 \to M/\fm M \to \fm M/x\fm M \xrightarrow{\varphi} M/xM \to M/\fm M \to 0.
\end{equation}

As we assume $M$ has minimal multiplicity, it follows that $x \fm M=\fm^2 M$; see \ref{mm-defn}. Therefore, $\fm M/x\fm M \cong k^{\oplus \mu_R(\fm M)}$. Thus, since $M/\fm M \cong k^{\oplus v}$, \eqref{Aseq1} implies that $\im(\varphi) \cong k^{\oplus \big(\mu_R(\fm M)-v\big)}$. Hence, \eqref{Aseq1} yields the following short exact sequence
\begin{equation} \label{Aseq2}\tag{\ref*{Aseq}.2}
0 \to k^{\bigoplus \big(\mu_R(\fm M)-v\big)} \to M/xM \to k^{\oplus v}\to 0,
\end{equation}
which shows that $\len_R(M/xM)=\mu_R(\fm M)$. Moreover, we know $e=\len_R(M/xM)$. 
Consequently, we obtain $e=\mu_R(\fm M)$ so that \eqref{Aseq2}  is the exact sequence we seek.

(2) It follows from~\eqref{mA1} and \eqref{mA2} in Remark~\ref{mA} that the conditions in parts (i), (ii), and (iv) are equivalent. Moreover, if $M$ has minimal multiplicity, then Part~(1) yields the equality $e=\mu_R(\fm M)$. We therefore assume $e=\mu_R(\fm M)$ and proceed to show that $M$ has minimal multiplicity.

Let $(y)$ be a minimal reduction of $\fm$. Then $y$ is a non zero-divisor on $R$, and hence is a non zero-divisor on the torsion-free $R$-modules $M$ and $\fm M$. Thus, we have the following commutative diagram: 
\[
\xymatrix{
0 \ar[r] & \fm M \ar[r] \ar[d]_{\cong}^y &M \ar[r] \ar[d]_{\cong}^y & M/\fm M \ar[r] \ar[d]^{y} & 0\\
0 \ar[r] & y \fm M \ar[r]  & yM \ar[r] & yM/y \fm M \ar[r] & 0 \\
}
\]
This diagram implies, by the Snake Lemma, that $M/\fm M \cong yM/y \fm M$. So,
\begin{equation} \label{Aseq3}\tag{\ref*{Aseq}.3}
\len_R(M/\fm M) = \len_R(yM/y \fm M).
\end{equation}
Part (1) shows the existence of an exact sequence $0 \to k^{\oplus (e-v)} \to M/yM \to k^{\oplus v} \to 0$, which yields, by the additivity of length, that $\len_R(M/yM)=(e-v)+v=e$. 
Thus, the assumption $e=\mu_R(\fm M)$ implies:
\begin{equation} \label{Aseq4}\tag{\ref*{Aseq}.4}
\len_R(M/yM)=e=\mu_R(\fm M)=\len_R(\fm M/\fm^2 M).
\end{equation}

Note that we have:
\begin{equation} \label{Aseq5}\tag{\ref*{Aseq}.5}
\len_R(M/y\fm M)=\len_R(yM/y \fm M)+\len_R(M/yM).
\end{equation}
We now use \eqref{Aseq3} and \eqref{Aseq4}, and deduce from \eqref{Aseq5} that
\begin{equation} \label{Aseq6}\tag{\ref*{Aseq}.6}
\len_R(M/y\fm M)=\len_R(M/\fm M)+\len_R(\fm M/ \fm^2 M)=\len_R(M/\fm^2 M).
\end{equation}

We also have the following equality, once again by the additivity of length:
\[
\len_R(M/y\fm M)=\len_R(M/\fm^2 M)+\len_R(\fm^2M/y \fm M). 
\]
Therefore, we use \eqref{Aseq6} and deduce that $\len_R(\fm^2M/y \fm M)=0$, that is, $\fm^2 M=y \fm M$. Consequently, $M$ has minimal multiplicity; see \ref{mm-defn}.
\end{proof}

The vanishing of $\Ext$ and $\Tor$ functors over rings of arbitrary dimension, involving modules of minimal multiplicity, is studied in detail in~\cite{Souvik2}. For example, if $M$ is a nonzero torsion-free $R$-module with minimal multiplicity such that $\e_R(M) > 2\mu_R(M)$, or equivalently $\mu_R(\fm M) = \e_R(M) > 2\mu_R(M)$, and $N$ is an $R$-module, then $\Ext_R^{i}(N,M) = 0$ for all $i \gg 0$ if and only if $\pd_R(N)< \infty$; see~\cite[Thm.~B(1)(iii)]{Souvik2}. The short exact sequence constructed in~\ref{Aseq}(1) can be used to obtain similar results. Although it has potential applications in a variety of contexts beyond those considered here, in this appendix we focus on a particular instance: we examine the vanishing of $\Tor_i^R(M,N)$ for the case where $\mu_R(\fm M) = \e_R(M) > 2\mu_R(M)$, providing a brief application of~\ref{Aseq}(1) motivated by~\cite{Souvik2}.

In the following, $\beta^R_n(N)$ denotes the $nth$ Betti number of a given $R$-module $N$.

\begin{chunk} \label{cx} Let $M$ be a nonzero torsion-free $R$-module. Assume $M$ has minimal multiplicity and $\e_R(M)>2\mu_R(M)$. If there exists an $R$-module $N$ such that $\Tor^R_{i} (M,N)=0$ for all $i\gg 0$, then either $\pd_R(N)<\infty$, or $\beta^R_{i+1}(N)>\beta^R_i(N)$ for all $i\gg 0$.
\end{chunk}

\begin{proof} Assume there exists an $R$-module $N$ such that $\Tor_i^R(M,N)=0$ for all $i \gg 0$. 
Let $(x)$ be a minimal reduction of $\fm$. Then $x$ is a non zero-divisor on $R$, and hence is a non zero-divisor on $M$ 
since $M$ is a nonzero torsion-free $R$-module. Thus, there is an exact sequence of $R$-modules
\[
0 \to M \xrightarrow{x} M \to M/xM \to 0,
\]
which yields $\Tor^R_{i}(M/xM, N)=0$ for all $i\gg 0$. Set $e=\e_R(M)$, $v=\mu_R(M)$, and consider the following exact sequence obtained from~\ref{Aseq}(1):
\begin{equation} \label{cxx}\tag{\ref*{cx}.1}
0 \to k^{\oplus (e-v)} \to M/xM \to k^{\oplus v} \to 0.
\end{equation}

Suppose $\pd_R(N)=\infty$. Then $\Tor^R_{i}(k, N)\neq 0$ for all $i\geq 0$. So, tensoring the exact sequence \eqref{cxx} with $N$, we obtain the following isomorphism of nonzero $R$-modules:
\begin{equation} \label{cxxx}\tag{\ref*{cx}.2}
\Tor^R_{i}(k, N)^{\oplus (e-v)} \cong \Tor^R_{i+1}(k, N)^{\oplus v} \text{ for all } i\gg 0.
\end{equation}
These isomorphisms \eqref{cxxx} give the equality $(e-v) \cdot \beta^R_i(N)=v \cdot \beta^R_{i+1}(N)$ for all $i\gg 0$. Since $e>2v$, it follows that
\[
\dfrac{\beta^R_{i+1}(N)}{\beta^R_i(N)} = \dfrac{e-v}{v} >\dfrac{2v-v}{v}=1. 
\]
This shows that $\beta^R_{i+1}(N)>\beta^R_i(N)$ for all $i\gg 0$, as claimed.
\end{proof}

We conclude the section by noting that the hypothesis $\e_R(M)>2\mu_R(M)$ is necessary in~\ref{cx}; see also~\cite[Thm. A(1)(i,ii)]{Souvik2}.

\begin{eg} Let $R=k[\![x,y,z]\!]/(xz-y^2, xy-z^2)$. Then $R$ is a one-dimensional complete intersection of codimension two. Let $T=R/(x,y)$, $M=\Omega_R T=(x,y) \subseteq R$, and let $N=R/(x,z)$. Then $\fm M=(x^2, z^2, xz, zy)$ and hence $\mu_R(\fm M)=4$. Also, $(x)$ is a minimal reduction of $\fm$ since $\fm^3=(x)\fm^2$. So, $\e(R)=\len_R(R/xR)=4$. Thus, $4=2\mu_R(M)=\e_R(R)=\e(M)=\mu_R(\fm M)$. This shows that $M$ has minimal multiplicity; see~\ref{corchar}. Also, it follows that $\Tor^R_{i}(M, N)=0$ for all $i\geq 1$, where $\pd_R(N)=\infty$ and $\beta^R_{i}(N)=2$ for all $i\geq 1$; see~\cite[1.3]{Jo2} and~\cite[4.2]{Jo1}.  
\end{eg}

\section*{acknowledgements} Part of this work was completed during a visit by Kumashiro to West Virginia University in August 2023.

Part of this work was also completed during visits by O.~Celikbas and E.~Celikbas to the Max Planck Institute for Mathematics in Bonn, Germany from January to June 2025. O.~Celikbas gratefully acknowledges the institute’s hospitality and financial support, and E.~Celikbas likewise thanks the institute for its hospitality.

Naoki Endo was supported by JSPS Grant-in-Aid for Scientific Research (C) 23K03058. Shinya Kumashiro was supported by JSPS Grant-in-Aid for Early-Career Scientists 24K16909.

\end{document}